\theoremstyle{plain}
\newtheorem{theorem}{Theorem}
\newtheorem{corollary}[theorem]{Corollary}
\newtheorem{lemma}[theorem]{Lemma}
\newtheorem{proposition}[theorem]{Proposition}
\theoremstyle{definition}
\newtheorem{definition}[theorem]{Definition}
\newtheorem{example}[theorem]{Example}
\begin{document}

\title[Enumeration of  $k$-Fibonacci Paths]{Enumeration of  $k$-Fibonacci Paths using Infinite Weighted Automata}

\author{Rodrigo De Castro}
\address{Universidad Nacional de Colombia \\ Departamento de Matem\'aticas \\ AA 14490\\
 Bogot\'a\\ Colombia}
\email{rdcastrok@unal.edu.co}

\author{Jos\'e L. Ram\'irez}
\address{ Universidad Nacional de Colombia \\ Departamento de Matem\'aticas \\ AA 14490 \\ and \\ Universidad Sergio Arboleda \\ Instituto de Matem\'aticas y sus Aplicaciones \\ Bogot\'a\\ Colombia }
\email{jlramirezr@unal.edu.co;  josel.ramirez@ima.usegioarboleda.edu.co}

\begin{abstract}
In this paper, we introduce a new family of generalized colored Motzkin paths, where horizontal steps are colored by means of $F_{k,l}$ colors, where $F_{k,l}$ is the $l$-th $k$-Fibonacci number. We study the enumeration of this family according to  the length. For this, we use infinite weighted automata.
\end{abstract}

\subjclass{52B05, 11B39, 05A15}

\keywords{Generalized colored Motzkin paths, $k$-Fibonacci paths, infinite weighted automata, generating functions}

\maketitle

\section{Introduction}
A lattice path of length $n$ is a sequence of points $P_1, P_2, \dots, P_n$ with $n\geqslant1$ such that each point $P_i$ belongs to the plane integer lattice and each two consecutive points $P_i$ and $P_{i+1}$ connect by a line segment.  We will consider lattice paths in  $\mathbb{Z}\times\mathbb{Z}$  using three step types: a rise step $U=(1,1)$, a fall step $D=(1,-1)$ and a $F_{k,l}-$colored length horizontal step $H_l=(l,0)$ for every positive integer $l$, such that $H_l$ is colored by means of $F_{k,l}$ colors, where $F_{k,l}$ is the $l$-th $k$-Fibonacci number.

Many kinds of generalizations of the Fibonacci Numbers have been presented in the literature  \cite{koshy, Hor} and the corresponding references. One of them is the \emph{$k$-Fibonacci Numbers}. For any positive integer number $k$, the $k$-Fibonacci sequence, say $\{F_{k,n}\}_{n\in \mathbb{N}}$, is defined recurrently by
\begin{align*}
F_{k,0}=0, \  \ F_{k,1}=1,  \   \  F_{k,n+1}=kF_{k,n}+F_{k,n-1},  \ \text{for} \ n\geqslant 1. 
\end{align*}
The  generating function of the $k$-Fibonacci numbers is  $f_k(x)=\frac{x}{1-kx-x^2}$, \cite{Falcon2, Falcon1}. This sequence was studied by Horadam in \cite{Horadam}. Recently, Falc\'on and Plaza \cite{Falcon1} found the $k$-Fibonacci numbers by studying the recursive application of two geometrical transformations used in the  four-triangle longest-edge (4TLE) partition. The interested reader is also referred to \cite{CEN, Falcon4, Falcon2, Falcon5, Falcon1,   RAM2, RAM, Asalas}, for further information about this.

A \emph{generalized $F_{k,l}$-colored Motzkin path} or simply \emph{$k$-Fibonacci path} is a sequence of rise, fall and $F_{k,l}-$colored length horizontal steps $(l=1,2,\dots)$ running from $(0,0)$ to $(n,0)$ that never pass below the $x$-axis. We denote by $\mathcal{M}_{F_{k,n}}$ the set of all $k$-Fibonacci paths of length $n$ and $\mathcal{M}_k=\bigcup_{n=0}^{\infty}\mathcal{M}_{F_{k,n}}$.  In Figure \ref{fig1} we show the set $\mathcal{M}_{F_{2,3}}$.

   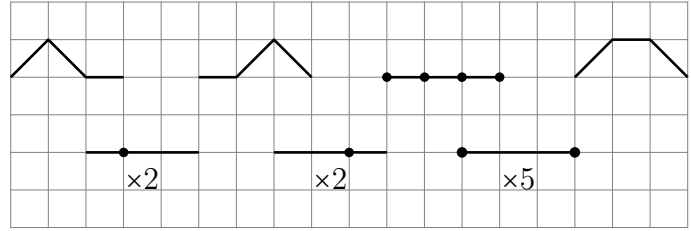
\begin{figure}[h]
\centering
\psset{unit=5mm}
\begin{pspicture}(0,-4)(18,2)
\psgrid[gridwidth=0.3pt,gridcolor=gray,subgriddiv=0, gridlabels=0]
\psline[linewidth=1pt]{-}(0,0)(1,1)(2,0)(3,0)
\psline[linewidth=1pt]{-}(5,0)(6,0)(7,1)(8,0)
\psline[linewidth=1pt](10,0)(13,0)
\psdots(10,0)(11,0)(12,0)(13,0)
\psline[linewidth=1pt]{-}(15,0)(16,1)(17,1)(18,0)
\psline[linewidth=1pt]{-}(2,-2)(5,-2)
\psdots(3,-2)(9,-2)
\psline[linewidth=1pt]{-}(7,-2)(10,-2)
\psline[linewidth=1pt]{*-*}(12,-2)(15,-2)
\rput(3.5,-2.7){$\times 2$}
\rput(8.5,-2.7){$\times 2$}
\rput(13.5,-2.7){$\times 5$}
\end{pspicture}
\caption{$k$-Fibonacci Paths of length 3, $|\mathcal{M}_{F_{2,3}}|=13$.}
 \label{fig1}
\end{figure}

A \emph{grand  $k$-Fibonacci path} is a $k$-Fibonacci path without the condition that never going below the $x$-axis. We denote by $\mathcal{M}_{F_{k,n}}^*$ the set of all grand $k$-Fibonacci paths of length $n$ and $\mathcal{M}_k^*=\bigcup_{n=0}^{\infty}\mathcal{M}_{F_{k,n}}^*$.  A \emph{prefix $k$-Fibonacci path} is a  $k$-Fibonacci path without the condition that ending on the $x$-axis. We denote by $\mathcal{PM}_{F_{k,n}}$ the set of all prefix $k$-Fibonacci paths of length $n$ and $\mathcal{PM}_k=\bigcup_{n=0}^{\infty}\mathcal{PM}_{F_{k,n}}$. Analogously, we have the family of \emph{prefix grand $k$-Fibonacci paths}.  We denote by $\mathcal{PM}_{F_{k,n}}^*$ the set of all prefix grand $k$-Fibonacci paths of length $n$ and $\mathcal{PM}_k^*=\bigcup_{n=0}^{\infty}\mathcal{PM}_{F_{k,n}}^*$.

In this paper, we study the generating function for the $k$-Fibonacci paths, grand $k$-Fibonacci paths, prefix $k$-Fibonacci paths, and prefix grand $k$-Fibonacci paths, according to the length. We use Counting Automata Methodology (CAM) \cite{ROD}, which is a variation of the methodology developed by Rutten \cite{RUT} called Coinductive Counting.  Counting Automata Methodology uses infinite weighted automata, weighted graphs and continued fractions.  The main idea of this methodology is find a counting automaton such that there exist a bijection between all words recognized by an automaton $\mathcal{M}$  and the family of  combinatorial objects.  From the counting automaton $\mathcal{M}$ is possible find the  ordinary generating function (GF)  of the family of combinatorial objects \cite{ROD}.

\section{Counting Automata Methodology}
The terminology and notation are mainly those of Sakarovitch \cite{JAC}.  An  \emph{automaton} $\mathcal{M}$ is a 5-tuple $\mathcal{M}=\left(\Sigma, Q, q_{0}, F, E\right)$, where $\Sigma$  is a nonempty input alphabet, $Q$ is a nonempty set of states of $\mathcal{M}$, $q_{0}\in Q$ is the initial state of $\mathcal{M}$,  $\emptyset \neq F\subseteq Q$ is the set of final states of $\mathcal{M}$ and $E \subseteq Q \times \Sigma \times Q$ is the set of transitions of $\mathcal{M}$.  The language recognized  by an automaton   $\mathcal{M}$ is denoted by  $L(\mathcal{M})$. If $Q, \Sigma$ and $E$ are finite sets, we say that  $\mathcal{M}$ is a finite automaton \cite{JAC}.

\begin{example}\label{eje1}
Consider the finite automaton $\mathcal{M}=\left(\Sigma, Q, q_{0}, F, E\right)$ where $\Sigma=\left\{a, b\right\}$, $Q=\left\{q_{0}, q_{1}\right\}$, $F=\left\{q_{0}\right\}$ and $E=\{(q_0,a,q_1), (q_0,b,q_0), (q_1,a,q_0) \}$.
The transition diagram of $\mathcal{M}$ is as shown in Figure \ref{ejediagrama}. It is easy to verify that  $L(\mathcal{M})=(b \cup aa)^*$.
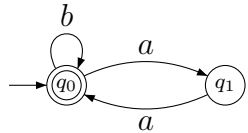
\begin{figure}[h]
  \begin{center}
    \unitlength=3pt
    \begin{picture}(20,10)(0,-1)
    \gasset{Nw=5,Nh=5,Nmr=2.5,curvedepth=0}
    \thinlines
    \node[Nmarks=ir,iangle=180](A0)(0,0){\tiny{$q_{0}$}}
    \node(A1)(20,0){\tiny{$q_{1}$}}
    \drawloop[loopdiam=4,loopangle=90](A0){$b$}
    \drawedge[curvedepth=3](A0,A1){$a$}
    \drawedge[curvedepth=3](A1,A0){$a$}
    \end{picture}
  \end{center}
  \caption{Transition diagram of $\mathcal{M}$, Example  \ref{eje1}.} \label{ejediagrama}
\end{figure}

 \end{example}

\begin{example}\label{eje2}
Consider the infinite automaton $\mathcal{M_D}=\left(\Sigma, Q, q_{0}, F, E\right)$, where $\Sigma=\left\{a, b\right\}$, $Q=\left\{q_{0}, q_{1}, \ldots\right\}$, $F=\left\{q_{0}\right\}$ and $E=\left\{(q_i, a, q_{i+1}), (q_{i+1}, b, q_{i}): i\in \mathbb{N}\right\}$.
The transition diagram of $\mathcal{M_D}$ is as shown in Figure \ref{ejediagrama2}.

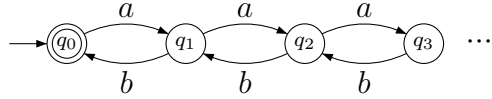
\begin{figure}[h]
 \centering
    \unitlength=3pt
    \begin{picture}(52, 6)(0,-1)
        \gasset{Nw=5,Nh=5,Nmr=2.5,curvedepth=0}
    \thinlines
    \node[Nmarks=ir,iangle=180, curvedepth=3](A0)(0,1){\tiny{$q_{0}$}}
    \node(A1)(15,1){\tiny{$q_{1}$}}
    \node(A2)(30,1){\tiny{$q_{2}$}}
    \node(A3)(45,1){\tiny{$q_{3}$}}
    \drawedge[curvedepth=2.5](A0,A1){$a$}
    \drawedge[curvedepth=2.5](A1,A0){$b$}
    \drawedge[curvedepth=2.5](A1,A2){$a$}
    \drawedge[curvedepth=2.5](A2,A1){$b$}
    \drawedge[curvedepth=2.5](A2,A3){$a$}
    \drawedge[curvedepth=2.5](A3,A2){$b$}
    \gasset{Nframe=n,Nadjust=w,Nh=6,Nmr=0}
    \node(P)(52,1){$\cdots$}
    \end{picture}
  \caption{Transition  diagram of  $\mathcal{M_D}$, Example \ref{eje2}.}
  \label{ejediagrama2}
\end{figure}

The language accepted by $\mathcal{M_D}$ is $$L(\mathcal{M_D})=\left\{w\in\Sigma^*: |w|_a=|w|_b \ \text{and for all prefix $v$ of} \ w, |v|_b\leq|v|_a \right\}.$$ \end{example}

An ordinary generating function  $F=\sum_{n=0}^{\infty}f_nz^n$ corresponds to a formal language $L$ if $f_n=\left|\left\{w\in L: \left|w\right|=n\right\}\right|$, i.e., if the $n$-th coefficient $f_n$ gives the number of words in $L$ with length $n$.

Given an alphabet $\Sigma$ and a semiring $\mathbb{K}$. A \emph{formal power series} or \emph{formal series} $S$ is a function $S:\Sigma^*\rightarrow \mathbb{K}$. The image of a word $w$ under $S$ is called the \emph{coefficient} of $w$ in $S$ and is denoted by $s_w$. The series $S$ is written as a formal sum $S = \sum_{w\in \Sigma^*} s_ww$. The set of formal power series over $\Sigma$ with coefficients in $\mathbb{K}$ is denoted by $\mathbb{K}\left\langle\langle \Sigma^* \right\rangle\rangle$.\\

An automaton over $\Sigma^*$  with weights in $\mathbb{K}$, or \emph{$\mathbb{K}$-automaton} over $\Sigma^*$ is a graph labelled with elements of $\mathbb{K}\left\langle\langle \Sigma^* \right\rangle\rangle$, associated with two maps from the set of vertices to $\mathbb{K}\left\langle\langle \Sigma^* \right\rangle\rangle$. Specifically, a \emph{weighted automaton} $\mathcal{M}$ over $\Sigma^*$ with weights in $\mathbb{K}$ is a 4-tuple  $\mathcal{M}=\left(Q, I, E, F\right)$ where $Q$ is a nonempty set of \emph{states} of $\mathcal{M}$, $E$ is an element of  $\mathbb{K}\left\langle\langle \Sigma^* \right\rangle\rangle^{Q\times Q}$ called \emph{transition matrix}. $I$ is an element of $\mathbb{K}\left\langle\langle \Sigma^* \right\rangle\rangle^{Q}$, i.e., $I$ is a function from $Q$ to $\mathbb{K}\left\langle\langle \Sigma^* \right\rangle\rangle$. $I$  is the \emph{initial function} of  $\mathcal{M}$ and can also be seen as a row vector of dimension $Q$, called \emph{initial vector} of  $\mathcal{M}$ and $F$ is an element of $\mathbb{K}\left\langle\langle \Sigma^* \right\rangle\rangle^{Q}$. $F$  is the \emph{final function} of $\mathcal{M}$ and  can also be seen as a column vector of dimension $Q$, called \emph{final vector} of  $\mathcal{M}$.

We say that $\mathcal{M}$ is a \emph{counting automaton} if $\mathbb{K}=\mathbb{Z}$ and $\Sigma^*=\left\{z\right\}^*$. With each automaton, we can associate a counting automaton. It can be obtained from a given automaton replacing every transition labelled with a symbol $a$, $a\in\Sigma$, by a transition labelled with $z$. This transition is called a \emph{counting transition} and the  graph is called  a \emph{counting automaton} of  $\mathcal{M}$. Each transition from $p$ to $q$ yields an equation
\begin{align*}
L(p)(z)=zL(q)(z) + \left[p\in F\right] + \cdots.
\end{align*}
We use  $L_p$ to denote  $L(p)(z)$. We also use Iverson's notation, $\left[P\right] =1$ if the proposition $P$ is true and  $\left[P\right] =0$ if $P$ is false.

\subsection{Convergent Automata and Convergent Theorems}
We denote by $L^{(n)}(\mathcal{M})$ the number of words of length $n$ recognized by the automaton $\mathcal{M}$, including repetitions.
\begin{definition}
We say that an automaton $\mathcal{M}$ is convergent if for all integer $n\geqslant 0$, $L^{(n)}(\mathcal{M})$ is finite.
\end{definition}

The proof of following theorems and propositions can be found in \cite{ROD}.

\begin{theorem}[First Convergence Theorem]\label{teorema1conv}
Let $\mathcal{M}$ be an automaton such that each vertex (state) of the counting automaton of  $\mathcal{M}$ has finite degree. Then $\mathcal{M}$ is convergent.
\end{theorem}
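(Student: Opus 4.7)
The plan is to translate the statement into a counting problem about paths in a directed graph and then proceed by induction on the length. Recall that the counting automaton of $\mathcal{M}$ has the same underlying directed multigraph as $\mathcal{M}$, with every input symbol replaced by $z$. Consequently, words of length $n$ accepted by $\mathcal{M}$ (counted with multiplicity) are in bijection with accepting paths of length exactly $n$ in the counting automaton, i.e., paths of length $n$ starting at $q_0$ and ending at some state in $F$. Thus it suffices to show that the set $P_n(q_0)$ of all paths of length $n$ beginning at $q_0$ is finite for every $n \geqslant 0$, since $L^{(n)}(\mathcal{M}) \leqslant |P_n(q_0)|$.

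First I would prove the slightly stronger statement: for every state $q \in Q$ and every $n \geqslant 0$, the set $P_n(q)$ of paths of length $n$ starting at $q$ is finite. Proceed by induction on $n$. The base case $n=0$ is immediate because $P_0(q) = \{q\}$, which has cardinality one. For the inductive step, any path of length $n+1$ from $q$ decomposes uniquely as a transition $q \xrightarrow{z} q'$ followed by a path of length $n$ from $q'$. Therefore
\begin{equation*}
|P_{n+1}(q)| \;=\; \sum_{q' \in Q}\, \mu(q,q')\, |P_n(q')|,
\end{equation*}
where $\mu(q,q')$ denotes the number of transitions from $q$ to $q'$ in the counting automaton.

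The key observation is that since $q$ has finite degree in the counting automaton, $\mu(q,q')$ is nonzero for only finitely many states $q'$, and each nonzero value is itself a finite positive integer. Combined with the inductive hypothesis (each term $|P_n(q')|$ is finite), the displayed sum is a finite sum of finite numbers, hence finite. This closes the induction, and specializing to $q = q_0$ yields that $L^{(n)}(\mathcal{M})$ is finite for every $n$, so $\mathcal{M}$ is convergent.

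There is no real obstacle in this argument; the only subtlety is to be explicit that ``finite degree'' guarantees both that the index set $\{q' : \mu(q,q') > 0\}$ is finite and that each multiplicity $\mu(q,q')$ is finite, so that the recursion defining $|P_{n+1}(q)|$ does not produce either an infinite sum or an infinite summand. Note that no uniform bound on degrees across $Q$ is required; pointwise finiteness at each vertex is enough, because for fixed $n$ only finitely many vertices are reached from $q_0$ within $n$ steps.
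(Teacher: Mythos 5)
The paper does not contain its own proof of this theorem: it states that the proofs of the convergence theorems can be found in \cite{ROD}, so there is no in-paper argument to compare yours against. Your proof is correct and self-contained. The reduction from ``words of length $n$ counted with repetitions'' to ``paths of length $n$ from $q_0$ in the counting automaton,'' followed by induction on $n$ using the recursion $|P_{n+1}(q)|=\sum_{q'}\mu(q,q')\,|P_n(q')|$, is exactly the argument the hypothesis is designed for: finite degree at each vertex gives both a finite index set $\{q':\mu(q,q')>0\}$ and finite multiplicities, so the sum stays finite. Your closing remarks are also accurate: only finite out-degree at each vertex is used (the stated hypothesis of finite degree is stronger), no uniform bound over $Q$ is needed, and the inequality $L^{(n)}(\mathcal{M})\leqslant |P_n(q_0)|$ correctly covers the ``including repetitions'' convention in the definition of $L^{(n)}(\mathcal{M})$.
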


\begin{example}
The counting automaton of the automaton  $\mathcal{M_D}$ in Example \ref{eje2} is convergent.
\end{example}
The following definition plays an important role in the development of applications because it allows to simplify counting automata whose transitions are formal series.

\begin{definition}
Let $\mathcal{M}$ be an automaton, and let $f(z)=\sum_{n=0}^{\infty}f_nz^n $  be a formal power series with  $f_{n} \in \mathbb{N}$ for all $n\geqslant 0$ and $f_{0}=0$. In a counting automaton of $\mathcal{M}$ the set of counting transitions  from state $p$ to state $q$, without intermediate final states, see Figure \ref{paralelo1} (left), is represented by a graph with a single edge labeled by $f(z)$, see  Figure \ref{paralelo1} (right).

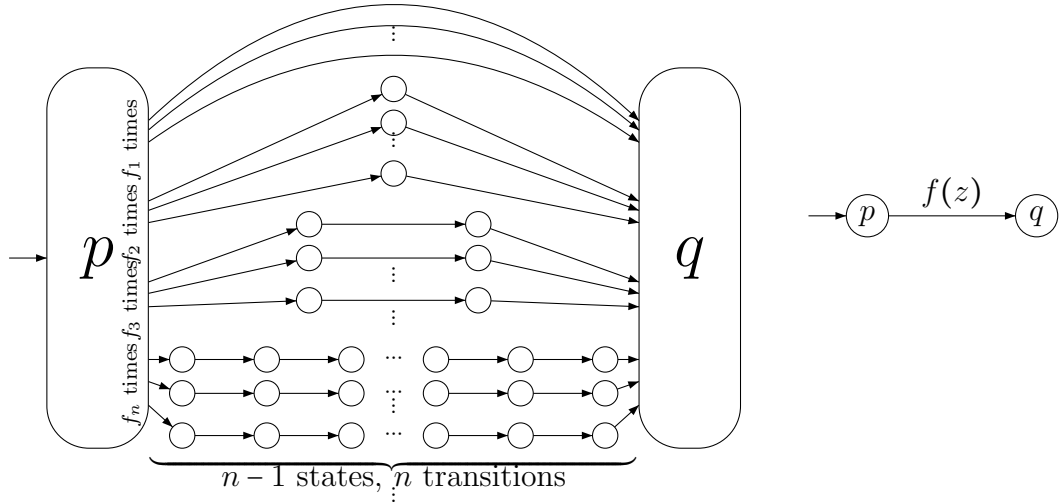
\begin{figure}[h]
\centering
   \unitlength=3.2pt
    \begin{picture}(106,58)(-5,3)
      \gasset{Nw=5,Nh=5,Nmr=2.5,curvedepth=0}
    \thinlines
    \node[Nmarks=i, iangle=180, Nw=12, Nadjustdist=5, Nh=45, Nmr=5](A0)(-5,30){\Huge{$p$}}
    \node[Nw=12, Nadjustdist=5, Nh=45, Nmr=5](A1)(65,30){\Huge $q$}
    \drawedge[curvedepth=20, syo=10,eyo=10](A0,A1){}
    \drawedge[curvedepth=18, syo=9.5,eyo=9.5](A0,A1){}
    \drawedge[curvedepth=15, syo=9,eyo=9](A0,A1){}
    \node[Nh=3,Nw=3,Nmr=1.5](A2)(30,50){}
     \node[Nh=3,Nw=3,Nmr=1.5](A3)(30,46){}
     \node[Nh=3,Nw=3,Nmr=1.5](A4)(30,40){}
    \drawedge[syo=4](A0,A2){}
    \drawedge[syo=3.5](A0,A3){}
    \drawedge[syo=3](A0,A4){}
    \drawedge[eyo=4](A2,A1){}
    \drawedge[eyo=3.5](A3,A1){}
    \drawedge[eyo=3](A4,A1){}
      \node[Nh=3,Nw=3,Nmr=1.5](A5)(20,34){}
     \node[Nh=3,Nw=3,Nmr=1.5](A55)(40,34){}
      \node[Nh=3,Nw=3,Nmr=1.5](A6)(20,30){}
     \node[Nh=3,Nw=3,Nmr=1.5](A66)(40,30){}
      \node[Nh=3,Nw=3,Nmr=1.5](A7)(20,25){}
     \node[Nh=3,Nw=3,Nmr=1.5](A77)(40,25){}
     \drawedge[syo=-5](A0,A5){}
     \drawedge(A5,A55){}
      \drawedge[eyo=-5](A55,A1){}
    \drawedge[syo=-5.5](A0,A6){}
     \drawedge(A6,A66){}
      \drawedge[eyo=-5.5](A66,A1){}
    \drawedge[syo=-6](A0,A7){}
     \drawedge(A7,A77){}
      \drawedge[eyo=-6](A77,A1){}
       \node[Nh=3,Nw=3,Nmr=1.5](A8)(5,18){}
     \node[Nh=3,Nw=3,Nmr=1.5](A88)(15,18){}
      \node[Nh=3,Nw=3,Nmr=1.5](A888)(25,18){}
     \node[Nh=3,Nw=3,Nmr=1.5](A82)(35,18){}
      \node[Nh=3,Nw=3,Nmr=1.5](A822)(45,18){}
     \node[Nh=3,Nw=3,Nmr=1.5](A8222)(55,18){}
        \node[Nh=3,Nw=3,Nmr=1.5](A9)(5,14){}
     \node[Nh=3,Nw=3,Nmr=1.5](A99)(15,14){}
      \node[Nh=3,Nw=3,Nmr=1.5](A999)(25,14){}
     \node[Nh=3,Nw=3,Nmr=1.5](A92)(35,14){}
      \node[Nh=3,Nw=3,Nmr=1.5](A922)(45,14){}
     \node[Nh=3,Nw=3,Nmr=1.5](A9222)(55,14){}
      \node[Nh=3,Nw=3,Nmr=1.5](A10)(5,9){}
     \node[Nh=3,Nw=3,Nmr=1.5](A110)(15,9){}
      \node[Nh=3,Nw=3,Nmr=1.5](A1110)(25,9){}
     \node[Nh=3,Nw=3,Nmr=1.5](A12)(35,9){}
      \node[Nh=3,Nw=3,Nmr=1.5](A112)(45,9){}
     \node[Nh=3,Nw=3,Nmr=1.5](A1112)(55,9){}
     \drawedge[syo=-12](A0,A8){}
     \drawedge(A8,A88){}
      \drawedge(A88,A888){}
    \drawedge(A82,A822){}
     \drawedge(A822,A8222){}
      \drawedge[eyo=-12](A8222,A1){}
    \drawedge[syo=-12.5](A0,A9){}
     \drawedge(A9,A99){}
      \drawedge(A99,A999){}
    \drawedge(A92,A922){}
     \drawedge(A922,A9222){}
      \drawedge[eyo=-12.5](A9222,A1){}
    \drawedge[syo=-12](A0,A10){}
     \drawedge(A10,A110){}
      \drawedge(A110,A1110){}
    \drawedge(A12,A112){}
     \drawedge(A112,A1112){}
      \drawedge[eyo=-12](A1112,A1){}
      \gasset{Nw=5,Nh=5,Nmr=2.5,curvedepth=0}
    \thinlines
    \node[Nmarks=i,iangle=180](A0)(86,35){$p$}
    \node(A1)(106,35){$q$}
    \drawedge(A0,A1){$f(z)$}

     \gasset{Nframe=n,Nadjust=w,Nh=6,Nmr=0}
    \node(P)(30,56.5){\footnotesize{\tiny{$\vdots$}}}
    \node(P)(30,44){\footnotesize{\tiny{$\vdots$}}}
    \node(P)(30,28){\footnotesize{\tiny{$\vdots$}}}
   \node(P)(30,12.5){\tiny{$\vdots$}}
    \node(P)(30,23){\tiny{$\vdots$}}
   \node(P)(30,18){\tiny{$\cdots$}}
   \node(P)(30,14){\tiny{$\cdots$}}
   \node(P)(30,9){\tiny{$\cdots$}}
   \node(P)(-1,44){\rotatebox{90}{\tiny $f_1$  times}}
   \node(P)(-1,34){\rotatebox{90}{\tiny$f_2$ times}}
   \node(P)(-1,25){\rotatebox{90}{\tiny$f_3$ times}}
   \node(P)(-1,15){\rotatebox{90}{\tiny $f_n$ times}}
   \node(P)(30,6){$\underbrace{ \hspace{6.5cm}  }$}
   \node(P)(30,4){$n-1$ states, $n$ transitions}
   \node(P)(30,2){\tiny{$\vdots$}}
         \end{picture}
      \caption{Transitions  from the state $p$ to $q$ and its transition in parallel.}
         \label{paralelo1}
         \end{figure}
This kind of transition is called a transition in parallel. The states $p$ and $q$  are called  visible states and the intermediate states are called  hidden states.
\end{definition}
\begin{example}\label{ejeconteoo}
In Figure  \ref{ejeautconteo} (left)  we display a counting  automaton $\mathcal{M}_1$ without  transitions in parallel, i.e., every transition is label by $z$. The transitions from state  $q_1$ to  $q_2$ correspond to the series  $\frac{1-\sqrt{1-4z}}{2}=z+z^2+2z^3+5z^4+14z^5+\cdots$. However, this automaton  can also be represented using transitions in parallel. Figure  \ref{ejeautconteo} (right) displays two examples.
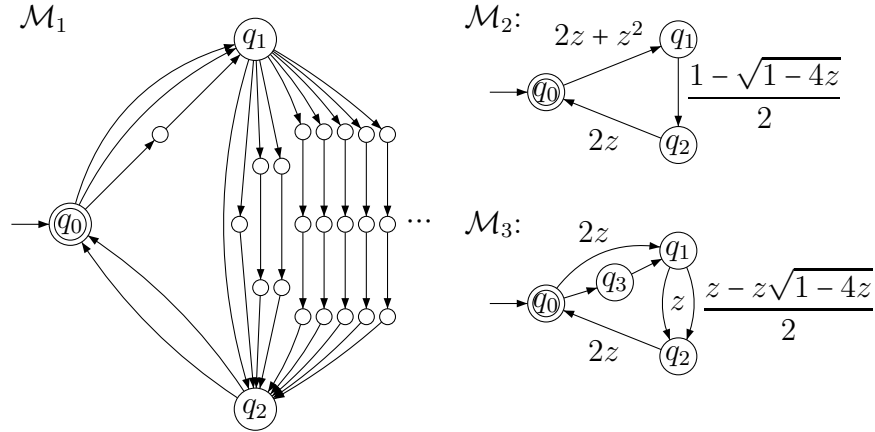
\begin{figure}[h]
\centering
   \unitlength=2pt
    \begin{picture}(130,78)(0,0)
      \gasset{Nw=5,Nh=5,Nmr=2.5,curvedepth=0}
    \thinlines
    \node[Nmarks=ri, iangle=180, Nw=8, Nadjustdist=8, Nh=8, Nmr=8](A0)(0,35){$q_0$}
    \node[Nw=8, Nadjustdist=8, Nh=8, Nmr=8](A1)(35,70){$q_1$}
   \node[Nw=8, Nadjustdist=8, Nh=8, Nmr=8](A2)(35,0){$q_2$}
   \node[Nw=3, Nadjustdist=3, Nh=3, Nmr=3](A011)(17,52){}
   \node[Nw=3, Nadjustdist=3, Nh=3, Nmr=3](A121)(32,35){}
   \node[Nw=3, Nadjustdist=3, Nh=3, Nmr=3](A122)(36,23){}
   \node[Nw=3, Nadjustdist=3, Nh=3, Nmr=3](A123)(36,46){}
   \node[Nw=3, Nadjustdist=3, Nh=3, Nmr=3](A1233)(40,23){}
   \node[Nw=3, Nadjustdist=3, Nh=3, Nmr=3](A1234)(40,46){}
   \node[Nw=3, Nadjustdist=3, Nh=3, Nmr=3](A124)(44,17.5){}
   \node[Nw=3, Nadjustdist=3, Nh=3, Nmr=3](A125)(44,35){}
   \node[Nw=3, Nadjustdist=3, Nh=3, Nmr=3](A126)(44,52.5){}
   \node[Nw=3, Nadjustdist=3, Nh=3, Nmr=3](A127)(48,17.5){}
   \node[Nw=3, Nadjustdist=3, Nh=3, Nmr=3](A128)(48,35){}
   \node[Nw=3, Nadjustdist=3, Nh=3, Nmr=3](A129)(48,52.5){}
   \node[Nw=3, Nadjustdist=3, Nh=3, Nmr=3](A1210)(52,17.5){}
   \node[Nw=3, Nadjustdist=3, Nh=3, Nmr=3](A1211)(52,35){}
   \node[Nw=3, Nadjustdist=3, Nh=3, Nmr=3](A1212)(52,52.5){}
   \node[Nw=3, Nadjustdist=3, Nh=3, Nmr=3](A1213)(56,17.5){}
   \node[Nw=3, Nadjustdist=3, Nh=3, Nmr=3](A1214)(56,35){}
   \node[Nw=3, Nadjustdist=3, Nh=3, Nmr=3](A1215)(56,52){}
   \node[Nw=3, Nadjustdist=3, Nh=3, Nmr=3](A1216)(60,17.5){}
      \node[Nw=3, Nadjustdist=3, Nh=3, Nmr=3](A1217)(60,35){}
         \node[Nw=3, Nadjustdist=3, Nh=3, Nmr=3](A1218)(60,52){}
   \drawedge[curvedepth=5](A0,A1){}
   \drawedge[curvedepth=8](A0,A1){}
   \drawedge(A0,A011){}
   \drawedge(A011,A1){}
   \drawedge[curvedepth=-6](A1,A2){}
   \drawedge[curvedepth=-3](A2,A0){}
  \drawedge[curvedepth=3](A2,A0){}
  \drawedge(A1,A121){}
  \drawedge(A121,A2){}
  \drawedge(A1,A1234){}
  \drawedge(A1234,A1233){}
  \drawedge(A1233,A2){}
    \drawedge(A1,A123){}
  \drawedge(A123,A122){}
  \drawedge(A122,A2){}
  \drawedge[curvedepth=1](A1,A1215){}
  \drawedge(A1215,A1214){}
  \drawedge(A1214,A1213){}
  \drawedge[curvedepth=1](A1213,A2){}
  \drawedge[curvedepth=1](A1,A1212){}
  \drawedge(A1212,A1211){}
  \drawedge(A1211,A1210){}
  \drawedge[curvedepth=1](A1210,A2){}
  \drawedge[curvedepth=1](A1,A129){}
  \drawedge(A129,A128){}
  \drawedge(A128,A127){}
  \drawedge[curvedepth=1](A127,A2){}
  \drawedge[curvedepth=1](A1,A126){}
  \drawedge(A126,A125){}
  \drawedge(A125,A124){}
  \drawedge[curvedepth=1](A124,A2){}
    \drawedge[curvedepth=1](A1,A1218){}
  \drawedge(A1218,A1217){}
  \drawedge(A1217,A1216){}
  \drawedge[curvedepth=1](A1216,A2){}
         \gasset{Nw=7,Nh=7,Nmr=3.5,curvedepth=0}
    \thinlines
    \node[Nmarks=ri, iangle=180](A0)(90,60){$q_0$}
    \node(A1)(115,70){ $q_1$}
   \node(A2)(115,50){$q_2$}
   \node[Nmarks=ri, iangle=180](AA0)(90,20){$q_0$}
    \node(AA1)(115,30){$q_1$}
   \node(AA2)(115,10){$q_2$}
   \node(AA3)(103,24){$q_3$}
   \drawedge(A0,A1){$2z+z^2$}
   \drawedge(A1,A2){$\dfrac{1-\sqrt{1-4z}}{2}$}
   \drawedge(A2,A0){$2z$}
      \drawedge[curvedepth=5](AA0,AA1){$2z$}
      \drawedge(AA0,AA3){}
      \drawedge(AA3,AA1){}
          \drawedge(AA2,AA0){$2z$}
     \drawedge[curvedepth=-3](AA1,AA2){$z$}
     \drawedge[curvedepth=3](AA1,AA2){$\dfrac{z-z\sqrt{1-4z}}{2}$}
     \gasset{Nframe=n,Nadjust=w,Nh=6,Nmr=0}
    \node(P)(-5,74){$\mathcal{M}_1$}
   \node(P)(66,35){$\cdots$}
    \node(P)(80,74){$\mathcal{M}_2$:}
        \node(P)(80,35){$\mathcal{M}_3$:}
         \end{picture}
  \caption{Counting automata with transitions in parallel, Example \ref{ejeconteoo}.}
  \label{ejeautconteo}
\end{figure}
\end{example}

\begin{theorem}[Second Convergence Theorem]\label{teorema2conv}
Let  $\mathcal{M}$ be an automaton, and  let  \linebreak $f_1^q(z), f_2^q(z), \ldots,$ be transitions in parallel from  state $q\in Q$ in a counting automaton of $\mathcal{M}$. Then $\mathcal{M}$ is convergent if the series
\begin{align*}
F^q(z)=\sum_{k=1}^{\infty}f_k^q(z)
\end{align*}
is a convergent series for each visible state $q\in Q$ of the counting automaton.
\end{theorem}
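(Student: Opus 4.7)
The plan is to reduce to Theorem \ref{teorema1conv} by showing that, for any fixed length $n$, only a finite sub-portion of the counting automaton of $\mathcal{M}$ participates in accepting length-$n$ words, and that this sub-portion has finite degree. The hypothesis supplies the finiteness needed at each visible state, and the normalization $[z^0]f_k^q(z)=0$ built into the definition of a transition in parallel propagates that finiteness globally.

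First I would unpack the hypothesis. Since each $f_k^q(z)$ is a formal power series in $\mathbb{N}[[z]]$ with zero constant term, the convergence of $F^q(z)=\sum_{k=1}^{\infty}f_k^q(z)$ as a formal power series means that $[z^n]F^q(z)=\sum_{k=1}^{\infty}[z^n]f_k^q(z)$ is a finite non-negative integer for every $n\geqslant 0$. Because all summands are non-negative integers, only finitely many indices $k$ satisfy $[z^n]f_k^q(z)>0$ for any given $n$. Taking the union over $m=1,\dots,n$ produces a finite set $T_n^q$ of indices of parallel transitions leaving $q$ that contribute at least one sub-path of length at most $n$, and the total number of such sub-paths is bounded above by $\sum_{m=1}^{n}[z^m]F^q(z)<\infty$.

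Next I would fix $n$ and inductively describe the reachable visible states. Set $V_0=\{q_0\}$ and define $V_j$ as $V_{j-1}$ together with all target visible states of parallel transitions indexed by $T_n^q$ for $q\in V_{j-1}$. By induction every $V_j$ is finite, hence so is $V_n$. Since each parallel transition consumes at least one letter, any accepting path of length $n$ starting at $q_0$ uses at most $n$ parallel transitions and therefore stays inside $V_n$, using only transitions indexed by $T_n^q$ at each state $q$ encountered. Restricting to this finite portion and to the finitely many length-$\leqslant n$ sub-paths inside each retained transition yields a finite automaton $\mathcal{M}_n$ satisfying $L^{(n)}(\mathcal{M}_n)=L^{(n)}(\mathcal{M})$. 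Theorem \ref{teorema1conv} applied to $\mathcal{M}_n$ (or a direct count) then gives $L^{(n)}(\mathcal{M})<\infty$, which is what convergence requires.

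The main obstacle is verifying that the truncation is faithful: no accepted length-$n$ word is lost, and no spurious ones are introduced. This hinges on two facts, both ultimately supplied by the hypothesis. The normalization $[z^0]f_k^q(z)=0$ forces every parallel transition to advance the length strictly, which bounds the number of parallel transitions used by a length-$n$ path and confines such a path to $V_n$. The finiteness of $\sum_k[z^m]f_k^q(z)$ for each $m\leqslant n$ makes the inductive construction of $V_n$ terminate with a finite set and ensures that only finitely many sub-paths of length at most $n$ live inside the retained transitions. Any subtleties about hidden intermediate states inside a parallel transition are absorbed into the finite count $\sum_{m=1}^{n}[z^m]F^q(z)$, so no separate argument is required for them.
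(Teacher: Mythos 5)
Your argument is correct. Note that the paper itself does not prove Theorem \ref{teorema2conv}; it only cites \cite{ROD}, so there is no in-paper proof to compare against, but your reduction is exactly the natural one: formal convergence of $F^q(z)$ with nonnegative integer coefficients gives, for each length $m\leqslant n$, only finitely many parallel transitions and finitely many hidden sub-paths leaving each visible state, the normalization $[z^0]f_k^q(z)=0$ bounds the number of segments of a length-$n$ accepting path by $n$, and the resulting finite truncation $\mathcal{M}_n$ (handled by Theorem \ref{teorema1conv} or a direct count) yields $L^{(n)}(\mathcal{M})<\infty$. The only implicit assumptions you rely on --- that the initial state is visible, that hidden states are never final, and that every transition of the counting automaton belongs to some parallel transition --- are built into the paper's definition of transitions in parallel, so no gap results.
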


\begin{proposition}\label{sisecuacion}
If $f(z)$ is a polynomial transition in parallel from state $p$  to $q$ in a finite counting automaton  $\mathcal{M}$, then this gives rise to an equation in the  system of GFs equations of $\mathcal{M}$ $$L_p=f(z)L_q + \left[p\in F\right]+ \cdots.$$
\end{proposition}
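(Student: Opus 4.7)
The plan is to reduce the parallel-transition bundle to a sum of elementary chains, apply the standard GF equation to each chain, and then add the contributions. First I would recall that for each elementary counting transition $p \xrightarrow{z} q'$, the state equation already given in the excerpt reads
\[ L_p = z L_{q'} + [p \in F] + (\text{contributions from other transitions leaving } p), \]
which is the building block from which the entire system of GF equations of $\mathcal{M}$ is assembled.

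Next I would analyze a single parallel branch of length $n$, that is, a chain
\[ p \xrightarrow{z} h_1 \xrightarrow{z} h_2 \xrightarrow{z} \cdots \xrightarrow{z} h_{n-1} \xrightarrow{z} q, \]
where the hidden states $h_1,\dots,h_{n-1}$ have, inside the bundle, a unique outgoing edge each and, by the definition of a transition in parallel, are not final. Writing down the elementary GF equation at each hidden state and substituting forward, the Iverson terms $[h_i\in F]$ vanish and one obtains $L_{h_i}=zL_{h_{i+1}}$ along the chain. A straightforward (finite) telescoping then shows that this single chain contributes exactly $z^{n}L_{q}$ to the equation for $L_p$.

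Because a transition in parallel labeled $f(z)=\sum_{n\ge 1}f_{n}z^{n}$ is, by the construction depicted in Figure \ref{paralelo1}, a disjoint union of $f_{n}$ such chains of length $n$ for each $n$, the combined contribution to the equation at $p$ is
\[ \sum_{n\ge 1} f_{n}\,z^{n}\,L_{q} \;=\; f(z)\,L_{q}. \]
Since $f(z)$ is a polynomial, only finitely many $n$ occur, so no convergence issue arises and the substitutions are purely algebraic. Collecting the contributions of any other transitions leaving $p$ into the ``$\cdots$'' and adding $[p\in F]$ for the empty accepted word produces the claimed equation $L_{p}=f(z)L_{q}+[p\in F]+\cdots$.

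The main obstacle I expect is just careful bookkeeping: I must verify that the hidden states realizing the $f_{n}$ chains of length $n$ are genuinely internal to this particular bundle, with no other incident edges and no finality, so that the forward substitution isolates the clean summand $f(z)L_{q}$ without picking up spurious cross‑contributions from elsewhere in the automaton. This is built into the definition of a transition in parallel, and once that definition is unpacked the remainder of the argument is a direct telescoping sum.
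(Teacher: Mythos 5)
Your argument is correct: the paper itself gives no proof of this proposition (it defers to \cite{ROD}), and your reduction of the parallel transition to its $f_n$ hidden chains of length $n$, elimination of the non-final hidden states by forward substitution, and summation to obtain the coefficient $f(z)$ of $L_q$ is exactly the standard argument behind the cited result. No gap to report.
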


\begin{proposition}\label{sisecuacion2}
Let $\mathcal{M}$ be a convergent automaton such that  a counting automaton of $\mathcal{M}$ has a finite number  of visible states $q_0,q_1,\ldots,q_r$, in which the number of transitions in parallel  starting from each state is finite.   Let
$f_1^{q_t}(z), f_2^{q_t}(z), \ldots, f_{s(t)}^{q_t}(z)$ be the transitions in parallel from the state $q_t\in Q$. Then the  GF for the  language $L(\mathcal{M})$ is $L_{q_0}(z)$. It is obtained  by solving the system of $r+1$ GFs equations
\begin{multline*}
L({q_t})(z)=f_1^{q_t}(z)L(q_{t_1})(z)+f_2^{q_t}(z)L(q_{t_2})(z)+ \cdots
+f_{s(t)}^{q_t}(z)L(q_{t_{s(t)}})(z)+[q_t\in F],
\end{multline*}
with  $0\leq t\leq r$, where $q_{t_k}$ is the visible state joined with $q_t$ through the transition in parallel $f_k^{q_t}$, and  $L(q_{t_k})$ is the GF for the language accepted by
$\mathcal{M}$ if  $q_{t_k}$ is the initial state.
\end{proposition}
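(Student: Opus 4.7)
The plan is to apply Proposition \ref{sisecuacion} locally at each visible state and then assemble the resulting equations into the claimed system, using the finiteness and convergence hypotheses to ensure everything is well-defined.

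First, I would fix a visible state $q_t$ and decompose the formal series $L(q_t)(z)$ according to the first move of an accepting computation starting from $q_t$. Such a computation either is empty, contributing the term $[q_t \in F]$, or it begins by traversing exactly one of the parallel transitions $f_k^{q_t}$ (for some $1 \le k \le s(t)$) until it reaches the next visible state $q_{t_k}$, after which it continues as an accepting computation from $q_{t_k}$. Because the intermediate states along a parallel transition are hidden, hence non-final and not shared with any other parallel path, this decomposition of accepting computations is unambiguous. Proposition \ref{sisecuacion}, applied individually to each parallel transition treated as a single labelled edge, then shows that the contribution of the $k$-th transition to $L(q_t)(z)$ is exactly the product $f_k^{q_t}(z)\, L(q_{t_k})(z)$.

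Next, I would sum these contributions for $k = 1, 2, \ldots, s(t)$. The sum is finite by hypothesis, so no convergence issue arises beyond what is already supplied by the Second Convergence Theorem, which ensures that each $f_k^{q_t}(z)$ is itself a well-defined formal power series. Doing this at every visible state $q_0, q_1, \ldots, q_r$ yields exactly the $r+1$ equations listed in the statement, in the $r+1$ unknowns $L(q_0)(z), \ldots, L(q_r)(z)$.

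Finally, since $\mathcal{M}$ is convergent, each $L(q_t)(z)$ exists as a well-defined element of $\mathbb{Z}[[z]]$, so the tuple $(L(q_0)(z), \ldots, L(q_r)(z))$ is indeed a solution of the assembled system; in particular the first component $L(q_0)(z)$ is the generating function of $L(\mathcal{M})$. The only point requiring genuine care is the unique-decomposition step at the start: it rests squarely on the definition of a parallel transition, in which hidden states are neither final nor endpoints of any other parallel path, so that once that definition is granted the proof reduces to finite bookkeeping on top of Proposition \ref{sisecuacion}.
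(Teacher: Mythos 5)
The paper never writes out a proof of this proposition (all proofs in this section are deferred to \cite{ROD}), so there is no in-text argument to compare with; judged on its own terms, your first-passage decomposition at visible states is the right idea and is essentially the argument the cited source uses. Two points should be tightened. First, the appeal to Proposition \ref{sisecuacion} is not quite licensed: that proposition is stated for a \emph{polynomial} transition in parallel in a \emph{finite} counting automaton, whereas here the series $f_k^{q_t}(z)$ may have infinitely many nonzero coefficients and the hidden part of the automaton may be infinite. Your own decomposition (empty word if $q_t\in F$, otherwise a unique first traversal of one parallel bundle up to the first visible state encountered, using that hidden states are non-final and belong to a single bundle) is what actually carries the argument, and it is the convergence hypothesis that makes the resulting coefficientwise counts finite; note that convergence is defined via $L^{(n)}(\mathcal{M})$ from $q_0$, so to get finiteness of the counts from an arbitrary visible state $q_t$ you should either restrict to states reachable from $q_0$ (the only ones that matter) or observe that accepting paths from $q_t$ inject into accepting paths from $q_0$ by prefixing a fixed path $q_0\rightsquigarrow q_t$. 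Second, you only verify that $\bigl(L(q_0)(z),\ldots,L(q_r)(z)\bigr)$ \emph{is a} solution of the system, while the proposition asserts that $L_{q_0}(z)$ is \emph{obtained by solving} it; to justify that, one should add that the solution is unique, which follows because every transition in parallel satisfies $f(0)=0$ by definition, so the system has the form $L=A(z)L+b$ with $A(0)=0$ and $I-A(z)$ is invertible over formal power series. With these two repairs your proof is complete and follows the intended route.
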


\begin{example}
The system of GFs equations  associated with $\mathcal{M}_2$, see Example \ref{ejeconteoo},  is
\begin{align*}
\begin{cases}
L_0&=(2z+z^2)L_1 + 1 \\
L_1&=\dfrac{1- \sqrt{1-4z}}{2}L_2\\
L_2&=2zL_0.
\end{cases}
\end{align*}
Solving the system for  $L_0$, we find the GF for the language  $\mathcal{M}_2$ and therefore of  $\mathcal{M}_1$ and  $\mathcal{M}_3$
\begin{align*}
L_0=\frac{1}{1-(2z^2+z^3)(1-\sqrt{1-4z})}= 1 + 4z^3 + 6z^4 + 10z^5 + 40z^6 + 114z^7+ \cdots.
\end{align*}
\end{example}

\subsection{An Example of the Counting Automata  Methodology (CAM)}

A counting automaton associated with an automaton $\mathcal{M}$ can be used to model combinatorial objects if there is a bijection between all words recognized by the automaton $\mathcal{M}$  and the combinatorial objects. Such method, along with the previous theorems and propositions constitute the  \textbf{Counting Automata  Methodology (CAM)}, see \cite{ROD}.

We distinguish three phases in the CAM:
\begin{enumerate}
\item Given a problem of enumerative combinatorics, we have to find a convergent automaton $\mathcal{M}$ (see Theorems \ref{teorema1conv} and \ref{teorema2conv}) whose GF is the solution of the problem.

\item Find a general formula for  the GF of $\mathcal{M}'$, where $\mathcal{M}'$ is an automaton obtained from $\mathcal{M}$ truncating a set of states or edges see Propositions \ref{sisecuacion} and \ref{sisecuacion2}. Sometimes we find a relation of iterative type, such as a continued fraction.

\item Find the GF $f(z)$ to which converge the GFs associated to each $\mathcal{M}'$, which is  guaranteed by the Convergences theorems. \end{enumerate}

\begin{example}\label{ejmotzkin}
A \emph{Motzkin path} of length $n$ is a lattice path of  $\mathbb{Z \times Z}$ running from $(0, 0)$ to $(n, 0)$ that never passes below the $x$-axis and whose permitted steps are the up diagonal step $U=(1, 1)$, the down diagonal step $D=(1,-1)$ and the horizontal step $H=(1, 0)$.  The number of Motzkin paths of length $n$ is the \emph{$n$-th Motzkin number} $m_{n}$, sequence  A001006\footnote{Many integer sequences and their properties are found electronically on the On-Line Encyclopedia of Sequences \cite{OEIS}.}.   The number of words of length $n$  recognized by the convergent automaton  $\mathcal{M}_{\mathrm{Mot}}$, see Figure \ref{figmotzkin2},  is  the $n$th Motzkin number and its GF is
$$M\left(z\right)=\sum_{i=0}^{\infty}m_{i}z^{i}=\frac{1-z-\sqrt{1-2z-3z^2}}{2z^2}.$$
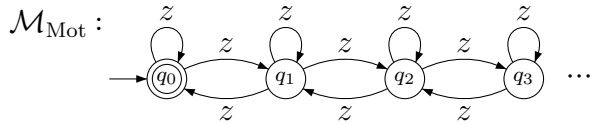
\begin{figure}[h]
  \centering
    \unitlength=3pt
    \begin{picture}(52, 8)(0,-1)
        \gasset{Nw=5,Nh=5,Nmr=2.5,curvedepth=0}
    \thinlines
    \node[Nmarks=ir,iangle=180, curvedepth=3](A0)(0,1){\tiny{$q_{0}$}}
    \node(A1)(15,1){\tiny{$q_{1}$}}
    \node(A2)(30,1){\tiny{$q_{2}$}}
    \node(A3)(45,1){\tiny{$q_{3}$}}
    \drawedge[curvedepth=2.5](A0,A1){$z$}
    \drawedge[curvedepth=2.5](A1,A0){$z$}
    \drawedge[curvedepth=2.5](A1,A2){$z$}
    \drawedge[curvedepth=2.5](A2,A1){$z$}
    \drawedge[curvedepth=2.5](A2,A3){$z$}
    \drawedge[curvedepth=2.5](A3,A2){$z$}
    \drawloop[loopdiam=4,loopangle=90](A0){$z$}
    \drawloop[loopdiam=4,loopangle=90](A1){$z$}
    \drawloop[loopdiam=4,loopangle=90](A2){$z$}
    \drawloop[loopdiam=4,loopangle=90](A3){$z$}
    \gasset{Nframe=n,Nadjust=w,Nh=6,Nmr=0}
    \node(P)(52,1){$\cdots$}
   \node(P)(-14,8){$\mathcal{M}_{\mathrm{Mot}}:$}
    \end{picture}
  \caption{Convergent automaton associated with Motzkin paths.} \label{figmotzkin2}
\end{figure}

In this case the edge from state $q_{i}$ to state  $q_{i+1}$ represents a rise, the edge from the state $q_{i+1}$ to $q_{i}$ represents a fall and the loops represent the level steps, see   Table \ref{tab:mot}.
\begin{table}[h]
  \centering
  \begin{tabular}{|c|c|c|}\hline
  $(q_i, z,q_{i+1})\in E \Leftrightarrow$
\psset{unit=5mm}
\begin{pspicture}(0,0)(2,2)
\psgrid[gridwidth=0.3pt,gridcolor=gray,subgriddiv=0, gridlabels=0]
\psline[linewidth=1pt]{-}(0,0)(2,2)
\end{pspicture}
&
$(q_{i+1}, z, q_{i})\in E \Leftrightarrow$
\psset{unit=5mm}
\begin{pspicture}(0,0)(2,2)
\psgrid[gridwidth=0.3pt,gridcolor=gray,subgriddiv=0, gridlabels=0]
\psline[linewidth=1pt]{-}(0,2)(2,0)
\end{pspicture}
&
$(q_{i}, z,q_{i})\in E \Leftrightarrow$
\psset{unit=5mm}
\begin{pspicture}(0,0)(2,2)
\psgrid[gridwidth=0.3pt,gridcolor=gray,subgriddiv=0, gridlabels=0]
\psline[linewidth=1pt]{-}(0,1)(2,1)
\end{pspicture} \\ \hline
       \end{tabular}
  \caption{Bijection between $\mathcal{M}_{\mathrm{Mot}}$ and Motzkin paths.}
  \label{tab:mot}
\end{table}

Moreover, it is clear that a word is recognized by  $\mathcal{M}_{\mathrm{Mot}}$  if and only if the number of steps to the right and to the left coincide, which ensures that the path is well formed. Then $$m_n=\left|\left\{w\in L(\mathcal{M}_{\mathrm{Mot}}): \left|w\right|=n \right\}\right|=L^{(n)}(\mathcal{M}_{\mathrm{Mot}}).$$

Let $\mathcal{M}_{\mathrm{Mot}s}$, $s\geq 1$ be the automaton  obtained from $\mathcal{M}_{\mathrm{Mot}}$, by deleting the states $q_{s+1}, q_{s+2}, \dots$.  Therefore the system of GFs equations of $\mathcal{M}_{\mathrm{Mot}s}$ is
\begin{align*}
\left\{\begin{aligned}
L_{0}&= zL_{0}+zL_{1}+1,\\
L_{i}&= zL_{i-1} + zL_{i}+ zL_{i+1}, \ \ 1\leq i \leq s-1, \\
L_{s}&= zL_{s-1}+zL_{s}.\end{aligned}
\right.
\end{align*}
Substituting repeatedly into each equation $L_i$, we have
\begin{center}
\begin{tabular}{c}
$L_0=\cfrac{H}{ 1-\cfrac{F^2}{1-\cfrac{F^2}{\cfrac{\vdots }{1-F^2}}}}$
\end{tabular}
\hspace{-3ex}
\raisebox{-2ex}{$\left.\phantom{\begin{matrix} 1 \\ 1 \\ 1  \\ 1 \\ 1
\end{matrix}} \right\}$}
\  \raisebox{-2ex}{$s$ times,}
\end{center}
where $F=\frac{z}{1-z}$ and $H=\frac{1}{1-z}$.  Since $\mathcal{M}_{\mathrm{Mot}}$ is convergent, then  as $s\rightarrow \infty$ we obtain a convergent continued fraction    $M$ of the  GF of $\mathcal{M}_{\mathrm{Mot}}$. Moreover,
\begin{align*}
M=\cfrac{H}{1-F^2\left(\frac{M}{H}\right)}.
\end{align*}
 Hence  $z^2M^2-(1-z)M+1=0$ and
$$M(z)=\frac{1-z\pm \sqrt{1-2z-3z^2}}{2z^2}.$$ Since $\epsilon \in L(\mathcal{M}_{\mathrm{Mot}})$, $M\rightarrow 0$ as $z\rightarrow 0$. Hence, we take the negative sign for the radical in $M(z)$.
\end{example}

\section{Generating Function for the $k$-Fibonacci Paths}

In this section we find the generating function for $k$-Fibonacci paths, grand $k$-Fibonacci paths, prefix $k$-Fibonacci paths and prefix grand $k$-Fibonacci paths, according to the length.

\begin{lemma}[\cite{ROD}]\label{teoflajolet}
The GF of the automaton $\mathcal{M}_{\mathrm{Lin}}$, see Figure \ref{conteolineal}, is
\begin{align*}
E(z)&=\cfrac{1}{1-h_0\left(z\right)-\cfrac{f_0\left(z\right)g_0\left(z\right)}{1-h_1\left(z\right)- \cfrac{f_1\left(z\right)g_1\left(z\right)}{\ddots}}},
\end{align*}
where $f_i(z), g_i(z)$ and $h_i(z)$ are transitions in parallel for all integer $i\geqslant 0$.

\begin{figure}[h]
\centering
    \unitlength=3pt
    \begin{picture}(52, 10)(0,-2)
        \gasset{Nw=5,Nh=5,Nmr=2.5,curvedepth=0}
    \thinlines
    \node[Nmarks=ir,iangle=180, curvedepth=3](A0)(0,0){\tiny{0}}
    \node(A1)(15,0){\tiny{1}}
    \node(A2)(30,0){\tiny{2}}
    \node(A3)(45,0){\tiny{3}}
    \drawedge[curvedepth=3](A0,A1){$f_0$}
    \drawedge[curvedepth=3](A1,A0){$g_0$}
    \drawedge[curvedepth=3](A1,A2){$f_1$}
    \drawedge[curvedepth=3](A2,A1){$g_1$}
    \drawedge[curvedepth=3](A2,A3){$f_2$}
    \drawedge[curvedepth=3](A3,A2){$g_2$}
    \drawloop[loopdiam=4,loopangle=90](A0){$h_0$}
    \drawloop[loopdiam=4,loopangle=90](A1){$h_1$}
    \drawloop[loopdiam=4,loopangle=90](A2){$h_2$}
    \drawloop[loopdiam=4,loopangle=90](A3){$h_3$}
    \gasset{Nframe=n,Nadjust=w,Nh=6,Nmr=0}
    \node(P)(52,0){$\cdots$}
    \node(P)(-15,10){$\mathcal{M}_{\mathrm{Lin}}:$}
    \end{picture}
  \caption{Linear infinite counting automaton $\mathcal{M}_{Lin}$}
  \label{conteolineal}
\end{figure}
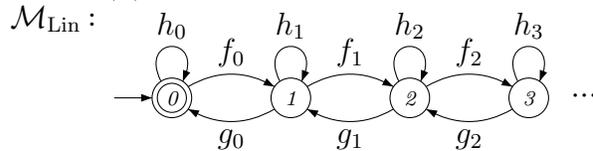
\end{lemma}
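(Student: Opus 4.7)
My plan is to mimic the truncation-and-limit argument used for Motzkin paths in Example \ref{ejmotzkin}. For each integer $s \geq 1$, I would first introduce the finite sub-automaton $\mathcal{M}_{\mathrm{Lin},s}$ obtained from $\mathcal{M}_{\mathrm{Lin}}$ by deleting the states $q_{s+1}, q_{s+2}, \ldots$ together with all their incident edges. Applying Proposition \ref{sisecuacion2} yields the tridiagonal system
\begin{align*}
L_0 &= h_0 L_0 + f_0 L_1 + 1, \\
L_i &= g_{i-1} L_{i-1} + h_i L_i + f_i L_{i+1}, \quad 1 \leq i \leq s-1, \\
L_s &= g_{s-1} L_{s-1} + h_s L_s,
\end{align*}
in which $L_0$ is by definition the GF of $L(\mathcal{M}_{\mathrm{Lin},s})$.

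The next step is to solve this system by back-substitution. The last equation immediately gives $L_s = g_{s-1}(1-h_s)^{-1} L_{s-1}$, and inserting this into the equation for $L_{s-1}$ expresses $L_{s-1}$ as a rational multiple of $L_{s-2}$ with denominator $1 - h_{s-1} - f_{s-1}g_{s-1}/(1-h_s)$. Iterating this elimination one equation at a time turns each successive denominator into the next partial denominator of the target continued fraction; after $s$ substitutions, using the top equation to isolate $L_0$ yields
\[
L_0^{(s)} = \cfrac{1}{1 - h_0 - \cfrac{f_0 g_0}{1 - h_1 - \cfrac{f_1 g_1}{\ddots - \cfrac{f_{s-1} g_{s-1}}{1 - h_s}}}},
\]
which is precisely the $s$-th convergent of $E(z)$.

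The final step is to let $s \to \infty$. Since every transition-in-parallel series $f_i, g_i, h_i$ has vanishing constant term by definition, the (finite) sum of transitions in parallel leaving any visible state of $\mathcal{M}_{\mathrm{Lin}}$ is itself a power series with no constant term, so Theorem \ref{teorema2conv} applies and $\mathcal{M}_{\mathrm{Lin}}$ is convergent. I expect the main obstacle to sit here: one must verify that $L_0^{(s)} \to E(z)$ as formal power series in $z$, not merely formally. The natural stabilization argument is that any word of length $\leq n$ in $L(\mathcal{M}_{\mathrm{Lin}})$ visits only states $q_j$ with $j \leq n$, so for each fixed $n$ the coefficient of $z^n$ in $L_0^{(s)}$ stops changing once $s \geq n$; coefficient-wise, $L_0^{(s)}$ therefore converges to $E(z)$, completing the proof.
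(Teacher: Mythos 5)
Your proposal is correct and follows essentially the approach the paper relies on: the paper defers the proof of Lemma~\ref{teoflajolet} to \cite{ROD} (it is Flajolet's continued-fraction theorem), but the truncation of $\mathcal{M}_{\mathrm{Lin}}$ to finitely many states, back-substitution in the tridiagonal system from Proposition~\ref{sisecuacion2} to get the $s$-th convergent, and the passage to the limit justified by convergence of the automaton is exactly the technique the paper itself illustrates in Example~\ref{ejmotzkin}. Your coefficient-stabilization argument (each parallel transition has zero constant term, so words of length $n$ never reach states beyond $q_n$) correctly supplies the formal-convergence step.
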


The last lemma coincides with Theorem 1 in \cite{FLA} and Theorem 9.1 in \cite{RUT}. However, this presentation extends their applications, taking into account that $f_i(z), g_i(z)$ and $h_i(z)$ are GFs, which can be GFs of  several variables.

\begin{corollary}\label{coro1}
If for all integers $i\geq0$, $f_i(z)=f(z), g_i(z)=g(z)$ and $h_i(z)=h(z)$  in $\mathcal{M}_{\mathrm{Lin}}$, then the GF is
\begin{align}
B(z)&=\frac{1-h(z)-\sqrt{(1-h(z))^2-4f(z)g(z)}}{2f(z)g(z)}  \\
&=\sum_{n=0}^{\infty}\sum_{m=0}^{\infty}C_n \binom{m+2n}{m}\left(f\left(z\right)g\left(z\right)\right)^n\left(h(z)\right)^m \label{coro1ec}\\
&=\cfrac{1}{1-h\left(z\right)-\cfrac{f\left(z\right)g\left(z\right)}{1-h\left(z\right)- \cfrac{f\left(z\right)g\left(z\right)}{1-h\left(z\right) -\cfrac{f\left(z\right)g\left(z\right)}{\ddots}}}},
\end{align}
where $C_n$ is the $n$th Catalan number, sequence A000108.
\end{corollary}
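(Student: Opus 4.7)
The plan has three parts, corresponding to the three expressions for $B(z)$. The continued fraction identity is immediate: it is simply the specialization of Lemma \ref{teoflajolet} obtained by setting $f_i(z)=f(z)$, $g_i(z)=g(z)$, and $h_i(z)=h(z)$ for every $i\geq 0$, so nothing further is needed there.

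For the closed quadratic expression I would exploit the self-similarity of the continued fraction. Since every level has the same transitions, the tail beginning at level $1$ is again equal to $B$, so
\begin{equation*}
B \;=\; \cfrac{1}{1-h(z)-f(z)g(z)\,B}.
\end{equation*}
This yields the quadratic $f(z)g(z)\,B^{2}-(1-h(z))\,B+1=0$, and the quadratic formula gives
\begin{equation*}
B(z) \;=\; \frac{1-h(z)\,\pm\,\sqrt{(1-h(z))^{2}-4f(z)g(z)}}{2f(z)g(z)}.
\end{equation*}
To choose the sign I would use that $f(0)=g(0)=h(0)=0$ (this is part of the definition of a transition in parallel), which forces $B(0)=1$; the $+$ branch blows up like $1/(fg)$ near $z=0$, so the minus sign is the correct one.

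For the series formula I would rewrite the closed form in terms of the Catalan generating function $C(x)=\sum_{n\geq 0}C_{n}x^{n}=\frac{1-\sqrt{1-4x}}{2x}$. Setting $u=fg/(1-h)^{2}$, a short manipulation gives
\begin{equation*}
B(z) \;=\; \frac{1}{1-h(z)}\,C\!\left(\frac{f(z)g(z)}{(1-h(z))^{2}}\right) \;=\; \sum_{n\geq 0}C_{n}\,\frac{(f(z)g(z))^{n}}{(1-h(z))^{2n+1}}.
\end{equation*}
Expanding the negative binomial series $(1-h)^{-(2n+1)}=\sum_{m\geq 0}\binom{m+2n}{m}h^{m}$ and collecting terms yields the claimed double sum. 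A combinatorial cross-check is also available: $\mathcal{M}_{\mathrm{Lin}}$ with uniform weights is a weighted Motzkin-path model, and the number of Motzkin paths of length $m+2n$ with $n$ rises, $n$ falls and $m$ level steps is $C_{n}\binom{m+2n}{m}$ (choose positions of the $m$ level steps, then arrange the remaining steps as a Dyck path).

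The main obstacle I expect is purely bookkeeping: the identity $1-\sqrt{1-4u}=2u\,C(u)$ must be inserted carefully and the correct branch of the square root must be identified via the initial-value condition $B(0)=1$. Once these two points are settled, the three equalities fall out from Lemma \ref{teoflajolet}, the self-similar quadratic, and standard Catalan and negative-binomial expansions, respectively.
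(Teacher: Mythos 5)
Your proposal is correct and follows essentially the route the paper itself relies on: the corollary is stated as a direct specialization of Lemma \ref{teoflajolet}, and the paper's analogous derivation (Example \ref{ejmotzkin}) uses the same self-similar quadratic $f g B^{2}-(1-h)B+1=0$ with the sign fixed by the value at $z=0$, while the double sum is the standard Catalan plus negative-binomial expansion of the closed form. No gaps; your use of $f(0)=g(0)=h(0)=0$ from the definition of transitions in parallel correctly justifies discarding the $+$ branch.
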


\begin{theorem}\label{TeoFibo1}
The generating function for the $k$-Fibonacci paths  according to the their length is
\begin{align}
T_k(z)&=\sum_{i=0}^{\infty}|\mathcal{M}_{F_{k,i}}|z^i\\
&=\frac{1-(k+1)z-z^2-\sqrt{(1-(k+1)z-z^2)^2-4z^2(1-kz-z^2)^2}}{2z^2(1-kz-z^2)} \label{ecfibo1}\\
&=\cfrac{1}{1-\frac{z}{1-kz-z^2}-\cfrac{z^2}{1-\frac{z}{1-kz-z^2}- \cfrac{z^2}{1-\frac{z}{1-kz-z^2} -\cfrac{z^2}{\ddots}}}}  \label{ecfibo11}
\end{align}
and
\begin{align*}
\left[z^t\right]T_k(z)=\sum_{n=0}^{t}\sum_{m=0}^{t-2n}\binom{m+2n}{m}C_nF_{k,t-2n-m+1}^{(m)},
\end{align*}
where $C_n$ is the $n$-th Catalan number and $F_{k,j}^{(r)}$ is a convolved $k$-Fibonacci number.
\end{theorem}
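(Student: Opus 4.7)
The plan is to exhibit an infinite weighted automaton whose accepted words correspond bijectively to $k$-Fibonacci paths, identify it with the linear automaton $\mathcal{M}_{\mathrm{Lin}}$ of Lemma \ref{teoflajolet} for a judicious choice of the series $f,g,h$, and then read off all three formulas directly from Corollary \ref{coro1}.

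First I would define an automaton $\mathcal{M}_{\mathrm{Fib}}$ with states $q_0,q_1,\ldots$ (with $q_0$ both initial and final), whose state $q_i$ records the current altitude, exactly as in the Motzkin example \ref{ejmotzkin}. The rise step $U$ and fall step $D$ contribute edges $q_i\to q_{i+1}$ and $q_{i+1}\to q_i$ of weight $z$. In place of the single level-step loop used for Motzkin paths, each state $q_i$ now carries, for every $l\geqslant 1$, a bundle of $F_{k,l}$ parallel loops of weight $z^l$, one per color of the horizontal step $H_l$. The bijection with $k$-Fibonacci paths, and hence the identity $T_k(z)=L_{q_0}(z)$, is then transparent. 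The total loop weight at each state is the single transition in parallel
\[
h(z)=\sum_{l=1}^{\infty}F_{k,l}z^l=f_k(z)=\frac{z}{1-kz-z^2},
\]
whose convergence as a formal power series, together with Theorem \ref{teorema2conv}, ensures that $\mathcal{M}_{\mathrm{Fib}}$ is convergent.

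Next I would observe that $\mathcal{M}_{\mathrm{Fib}}$ coincides with $\mathcal{M}_{\mathrm{Lin}}$ under the uniform substitution $f_i(z)=g_i(z)=z$ and $h_i(z)=h(z)=z/(1-kz-z^2)$ for all $i\geqslant 0$. The continued fraction \eqref{ecfibo11} is then the continued-fraction identity of Corollary \ref{coro1} verbatim. For the closed form \eqref{ecfibo1}, the only calculation needed is
\[
1-h(z)=\frac{1-(k+1)z-z^2}{1-kz-z^2},
\]
after which the expression $\frac{1-h-\sqrt{(1-h)^2-4fg}}{2fg}$ provided by Corollary \ref{coro1} reduces to \eqref{ecfibo1} upon clearing a common factor of $1-kz-z^2$ from numerator and denominator.

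Finally, for the explicit coefficient formula I would exploit identity \eqref{coro1ec}. With $f(z)g(z)=z^2$, each summand carries the factor $z^{2n}$, and with $h(z)^m=z^m/(1-kz-z^2)^m$, the expansion of $(1-kz-z^2)^{-m}$ in terms of convolved $k$-Fibonacci numbers $F_{k,j}^{(m)}$ gives $[z^{t-2n}]h(z)^m=F_{k,t-2n-m+1}^{(m)}$. Extracting $[z^t]$ from \eqref{coro1ec} and restricting the double sum to the range $2n+m\leqslant t$ yields the stated triple-indexed formula. The only delicate point in the whole argument is the bookkeeping of the index shift between the convolved $k$-Fibonacci numbers and the powers of $z$; the substantive enumerative content has already been absorbed into Corollary \ref{coro1}, so no further combinatorial work should be required.
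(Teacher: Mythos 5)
Your proposal is correct and follows essentially the same route as the paper: both identify the altitude-recording automaton with $\mathcal{M}_{\mathrm{Lin}}$ via $f(z)=g(z)=z$ and $h(z)=\frac{z}{1-kz-z^2}$, obtain \eqref{ecfibo1} and \eqref{ecfibo11} from Corollary \ref{coro1}, and extract the coefficient formula from \eqref{coro1ec} by expanding $(1-kz-z^2)^{-m}$ into convolved $k$-Fibonacci numbers. Your explicit description of the loop bundles of $F_{k,l}$ parallel loops of weight $z^l$ and the convergence check via Theorem \ref{teorema2conv} only spells out details the paper leaves implicit.
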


Convolved $k$-Fibonacci numbers $F_{k,j}^{(r)}$ are defined by $$f_k^{(r)}(x)=(1-kx-x^2)^{-r}=\sum_{j=0}^{\infty}F_{k,j+1}^{(r)}x^j,  \ \ r\in \mathbb{Z}^+.$$
 Note that
\begin{align*}
F_{k, m+1}^{(r)}=\sum_{j_1+j_2+\cdots +j_r=m}F_{k, j_1+1}F_{k, j_2+1}\cdots F_{k, j_r+1}.  
\end{align*}
Moreover, using a result of Gould \cite[p. 699]{GOU} on Humbert polynomials (with $n = j, m = 2,
x = k/2, y = -1, p = -r$ and $C = 1$), we have
\begin{align*}
F_{k, j+1}^{(r)}=\sum_{l=0}^{\lfloor j/2 \rfloor}\binom{j+r-l-1}{j-l}\binom{j-l}{l}k^{j-2l}.
\end{align*}
Ram\'irez  \cite{RAM} studied  some properties of convolved $k$-Fibonacci numbers.

\begin{proof}
Equations (\ref{ecfibo1}) and (\ref{ecfibo11}) are clear from Corollary \ref{coro1}  taking  $f(z)=z=g(z)$ and $h(z)=\frac{z}{1-kz-z^2}$. Note that $h(z)$ is the GF of $k$-Fibonacci numbers. In this case the edge from state $q_{i}$ to state  $q_{i+1}$ represents a rise, the edge from the state $q_{i+1}$ to $q_{i}$ represents a fall and the loops represent the  $F_{k,l}-$colored length horizontal steps $(l=1,2,\dots)$. Moreover, from Equation (\ref{coro1ec}), we obtain

\begin{align*}
T_k(z)&=\sum_{n=0}^{\infty}\sum_{m=0}^{\infty}C_n\binom{m+2n}{m}z^{2n}\left(\frac{z}{1-kz-z^2}\right)^m\\
&=\sum_{n=0}^{\infty}\sum_{m=0}^{\infty}C_n\binom{m+2n}{m}z^{2n+m}\left(\frac{1}{1-kz-z^2}\right)^m \\
&=\sum_{n=0}^{\infty}\sum_{m=0}^{\infty}C_n\binom{m+2n}{m}z^{2n+m}\sum_{i=0}^{\infty} F_{k,i+1}^{(m)}z^i\\
&=\sum_{n=0}^{\infty}\sum_{m=0}^{\infty}\sum_{i=0}^{\infty} C_n F_{k,i+1}^{(m)} \binom{m+2n}{m}z^{2n+m+i}, \\
\end{align*}
taking $s=2n+m+i$
\begin{align*}
T_k(z)=\sum_{n=0}^{\infty}\sum_{m=0}^{\infty}\sum_{s=2n+m}^{\infty} C_n F_{k,s-2n-m+1}^{(m)} \binom{m+2n}{m}z^{s}.
\end{align*}
Hence
\begin{align*}
\left[z^t\right]T_k(z)=\sum_{n=0}^{t}\sum_{m=0}^{t-2m} C_n F_{k,t-2n-m+1}^{(m)} \binom{m+2n}{m}.
 \end{align*}
\end{proof}

In Table \ref{tabfibok1} we show the first terms of the sequence $|\mathcal{M}_{F_{k,i}}|$ for $k=1, 2, 3, 4$.

\begin{table}[H]
  \centering
  \begin{tabular}{|c|l|}\hline
$k$  & Sequence    \\ \hline
   1 & 1, 1, 3, 8, 23, 67, 199, 600, 1834, 5674, 17743, \dots  \\  \hline
   2 & 1, 1, 4, 13, 47, 168, 610, 2226, 8185, 30283, 112736, \dots  \\  \hline
   3 & 1, 1, 5, 20, 89, 391, 1735, 7712, 34402, 153898, 690499, \dots   \\  \hline
    4 & 1, 1, 6, 29, 155, 820, 4366, 23262, 124153, 663523, 3551158, \dots  \\  \hline
       \end{tabular}
  \caption{Sequences $|\mathcal{M}_{F_{k,i}}|$ for   $k=1, 2, 3, 4$.}
  \label{tabfibok1}
\end{table}

\begin{definition}\label{fgoei} For all integers $i\geq 0$ we define the continued fraction
 $E_i(z)$ by:
\begin{align*}
E_i(z)&=\cfrac{1}{1-h_i\left(z\right)-\cfrac{f_{i}\left(z\right)g_{i}\left(z\right)}{1-h_{i+1}\left(z\right)- \cfrac{f_{i+1}\left(z\right)g_{i+1}\left(z\right)}{\ddots}}},
\end{align*}
where $f_i(z), g_i(z), h_i(z)$ are transitions in parallel for all  integers positive $i$.

\end{definition}
\begin{lemma}[\cite{ROD}]\label{teoflajoletbi}
The  GF of the automaton $\mathcal{M}_{\mathrm{BLin}}$, see Figure \ref{conteobilineal}, is
\begin{align*}
E_b(z)&=\cfrac{1}{1-h_0(z)-f_0(z)g_0(z)E_1(z)-f'_0(z)g'_0(z)E'_1(z)},
\end{align*}
where $f_i(z), f'_i(z), g_i(z), g'_i(z), h_i(z)$ and $h'_i(z)$ are transitions in parallel for all $i\in\mathbb{Z}$.
 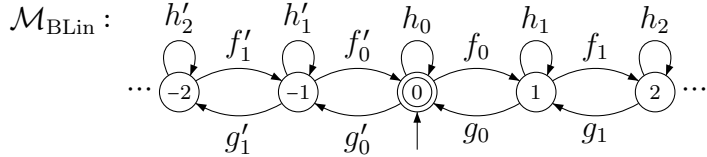
\begin{figure}[h]
 \begin{center}
    \unitlength=3pt
    \begin{picture}(80, 13)(-40,-2)
        \gasset{Nw=5,Nh=5,Nmr=2.5,curvedepth=0}
    \thinlines
    \node[Nmarks=ir,iangle=-90, curvedepth=3](A0)(0,1){\tiny{$0$}}
    \node(A1)(15,1){\tiny{$1$}}
    \node(A2)(30,1){\tiny{$2$}}
    \node(A11)(-15,1){\tiny{$-1$}}
     \node(A22)(-30,1){\tiny{$-2$}}
    \drawedge[curvedepth=3](A0,A1){$f_0$}
    \drawedge[curvedepth=3](A1,A0){$g_0$}
    \drawedge[curvedepth=3](A1,A2){$f_1$}
    \drawedge[curvedepth=3](A2,A1){$g_1$}
    \drawedge[curvedepth=3](A0,A11){$g'_0$}
    \drawedge[curvedepth=3](A11,A22){$g'_1$}
        \drawedge[curvedepth=3](A22,A11){$f'_1$}
    \drawedge[curvedepth=3](A11,A0){$f'_0$}
    \gasset{Nframe=n,Nadjust=w,Nh=6,Nmr=0}
    \node(P)(35,1){$\cdots$}
       \node(P)(-35,1){$\cdots$}
    \node(P)(-45,10){$\mathcal{M}_{\mathrm{BLin}}:$}
       \drawloop[loopdiam=4,loopangle=90](A0){$h_0$}
    \drawloop[loopdiam=4,loopangle=90](A1){$h_1$}
    \drawloop[loopdiam=4,loopangle=90](A2){$h_2$}
    \drawloop[loopdiam=4,loopangle=90](A11){$h'_1$}
       \drawloop[loopdiam=4,loopangle=90](A22){$h'_2$}
    \end{picture}
  \end{center}
  \caption{Linear infinite counting automaton  $\mathcal{M}_{BLin}$.}
  \label{conteobilineal}
\end{figure}
\end{lemma}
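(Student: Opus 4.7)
The plan is to mimic the argument used for Lemma \ref{teoflajolet} (the one-sided case), collapsing the right and left half-lines of $\mathcal{M}_{\mathrm{BLin}}$ independently into the continued fractions $E_1(z)$ and $E'_1(z)$ of Definition \ref{fgoei}, and recombining them at the pivot state $0$. I first truncate $\mathcal{M}_{\mathrm{BLin}}$ to a finite subautomaton by removing all states $q_n$ with $|n| > s$. Reading off the system via Proposition \ref{sisecuacion2}, and letting $L_n$ denote the GF recognised with $q_n$ as initial state (keeping $q_0$ as the only final state), yields
\begin{align*}
L_0 &= h_0 L_0 + f_0 L_1 + g'_0 L_{-1} + 1,\\
L_i &= h_i L_i + g_{i-1} L_{i-1} + f_i L_{i+1}, \qquad 1\leq i \leq s-1,\\
L_{-i} &= h'_i L_{-i} + f'_{i-1} L_{-(i-1)} + g'_i L_{-(i+1)}, \qquad 1\leq i \leq s-1,\\
L_s &= h_s L_s + g_{s-1} L_{s-1},\\
L_{-s} &= h'_s L_{-s} + f'_{s-1} L_{-(s-1)}.
\end{align*}

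Next, on the right half I introduce $r_i = L_i/L_{i-1}$, which satisfies $r_i = g_{i-1}/(1 - h_i - f_i r_{i+1})$ for $1 \leq i < s$, together with the boundary value $r_s = g_{s-1}/(1-h_s)$. Unfolding this recurrence from $i = s$ down to $i = 1$ is precisely the step-by-step construction of the level-$s$ truncation $E_1^{(s)}(z)$ of the continued fraction $E_1(z)$, and produces $r_1 = g_0(z)\, E_1^{(s)}(z)$, hence $f_0 L_1 = f_0(z) g_0(z) E_1^{(s)}(z) L_0$. The mirror-image calculation on the left half, with $s_i = L_{-i}/L_{-(i-1)}$ obeying $s_i = f'_{i-1}/(1 - h'_i - g'_i s_{i+1})$, produces $s_1 = f'_0(z)\,(E'_1)^{(s)}(z)$ and therefore $g'_0 L_{-1} = f'_0(z) g'_0(z) (E'_1)^{(s)}(z) L_0$. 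Substituting both expressions into the first equation and solving for $L_0$ gives
\begin{equation*}
L_0^{(s)}(z) = \frac{1}{1 - h_0(z) - f_0(z) g_0(z) E_1^{(s)}(z) - f'_0(z) g'_0(z) (E'_1)^{(s)}(z)}.
\end{equation*}

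Finally, I pass to the limit $s \to \infty$. Since $\mathcal{M}_{\mathrm{BLin}}$ is assumed convergent (Theorem \ref{teorema2conv}), the three truncations converge coefficient-wise in the ring of formal power series: $L_0^{(s)} \to E_b(z)$, $E_1^{(s)} \to E_1(z)$, and $(E'_1)^{(s)} \to E'_1(z)$. Taking the limit in the displayed formula yields the claimed identity. The main technical obstacle is justifying that the limit passes through the reciprocal; this is exactly the convergence argument driving the proof of Lemma \ref{teoflajolet} in \cite{ROD}, and it applies here without modification because the right and left half-lines have been decoupled in the truncated system, so no new subtlety arises from combining them at state $0$.
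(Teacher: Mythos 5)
Your argument is correct and is essentially the same route the paper takes for this lemma (which it quotes from \cite{ROD}): truncate the infinite automaton, read off the GF system via Proposition \ref{sisecuacion2}, back-substitute to produce the truncated continued fractions on each half-line, solve for $L_0$, and pass to the limit using convergence, exactly the CAM pattern illustrated in Example \ref{ejmotzkin} and in the one-sided Lemma \ref{teoflajolet}. No gaps worth noting.
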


\begin{corollary}\label{corobi}
If for all integers  $i$, $f_i(z)=f(z)=f'_i(z), g_i(z)=g(z)=g'_i(z)$ and $h_i(z)=h(z)=h'_i(z)$  in $\mathcal{M}_{\mathrm{BLin}}$, then we have the GF
\begin{align}
B_b(z)&=\frac{1}{\sqrt{(1-h(z))^2-4f(z)g(z)}}\\
&=\cfrac{1}{1-h(z) - \cfrac{2f(z)g(z)}{1-h(z)-\cfrac{f(z)g(z)}{1-h(z)-\cfrac{f(z)g(z)}{\ddots}}}},
\end{align}
where $f(z), g(z)$ and $h(z)$ are  transitions in parallel. Moreover, if  $f(z)=g(z)$, then we have the GF
\begin{align}
B_b(z)=\frac{1}{1-h(z)} + \sum_{n=1}^{\infty}\sum_{k=0}^{\infty}\sum_{l=0}^{\infty}2^n\frac{n}{n+2k}\binom{n+2k}{k}\binom{l+2n+2k}{l}f(z)^{2n+2k}h(z)^{l}.  \label{eccorobi2}
\end{align}

\end{corollary}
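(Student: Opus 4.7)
The plan is first to specialize Lemma \ref{teoflajoletbi}. Under the uniform assumption $f_i=f$, $f_i'=f$, $g_i=g$, $g_i'=g$, $h_i=h$, $h_i'=h$ for all $i$, the subautomaton of $\mathcal{M}_{\mathrm{BLin}}$ obtained by restricting to states $\{1,2,3,\dots\}$ is an instance of $\mathcal{M}_{\mathrm{Lin}}$ with constant labels, and likewise for $\{-1,-2,\dots\}$. Hence both $E_1(z)$ and $E_1'(z)$ collapse to the generating function $B(z)$ of Corollary \ref{coro1}. Lemma \ref{teoflajoletbi} then gives
\[
B_b(z)=\frac{1}{1-h-2fg\,B(z)}.
\]
Substituting the closed form $B(z)=\frac{1-h-\sqrt{(1-h)^2-4fg}}{2fg}$ collapses the denominator to $\sqrt{(1-h)^2-4fg}$, producing the first displayed identity. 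The continued-fraction form is obtained by instead substituting the continued fraction for $B(z)$ from Corollary \ref{coro1} into the same expression; this is why the top level shows $2fg$ while all deeper levels carry only $fg$.

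For the expansion in the case $f=g$, the plan is to factor $(1-h)$ out of the denominator of $B_b(z)=1/(1-h-2f^2B(z))$ and expand geometrically:
\[
B_b(z)=\frac{1}{1-h}\sum_{j\geqslant 0}\left(\frac{2f^2\,B(z)}{1-h}\right)^{j}.
\]
The term $j=0$ contributes the leading $\frac{1}{1-h}$. For $j\geqslant 1$, I need a clean formula for $B(z)^{j}$. Setting $\xi=f^{2}/(1-h)^{2}$, one rewrites $B(z)=\dfrac{1-h}{f^{2}}\cdot\dfrac{1-\sqrt{1-4\xi}}{2}$, which reduces the problem to the standard power of $xC(x)$, where $C$ is the Catalan generating function. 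Lagrange inversion (or the cycle lemma) then yields
\[
\left(\frac{1-\sqrt{1-4\xi}}{2}\right)^{j}=\sum_{k\geqslant 0}\frac{j}{j+2k}\binom{j+2k}{k}\xi^{j+k},
\]
from which
\[
B(z)^{j}=\sum_{k\geqslant 0}\frac{j}{j+2k}\binom{j+2k}{k}\frac{f^{2k}}{(1-h)^{j+2k}}.
\]

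Plugging this into the geometric series, the total power of $(1-h)^{-1}$ in the $(j,k)$ summand is $2j+2k+1$, and the total power of $f$ is $2j+2k$; expanding the resulting $(1-h)^{-(2j+2k+1)}$ by the negative binomial series $\sum_{l\geqslant 0}\binom{l+2j+2k}{l}h^{l}$ and relabelling $j\to n$ produces exactly \eqref{eccorobi2}. The hard part will be establishing the power identity for $B(z)^{j}$; once that is in hand, the rest amounts to careful bookkeeping among three nested sums, plus isolating the $j=0$ term to obtain the $\frac{1}{1-h}$ summand outside the triple sum.
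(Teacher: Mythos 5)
Your proposal is correct and follows the same route the paper intends (the corollary is stated as an immediate specialization of Lemma \ref{teoflajoletbi}): with constant labels $E_1(z)=E_1'(z)=B(z)$ from Corollary \ref{coro1}, so $B_b=1/(1-h-2fgB)$, which collapses to $1/\sqrt{(1-h)^2-4fg}$ and to the stated continued fraction. Your derivation of (\ref{eccorobi2}) via the geometric expansion, the Catalan-power identity $\bigl(\tfrac{1-\sqrt{1-4\xi}}{2}\bigr)^{j}=\sum_{k\geqslant 0}\tfrac{j}{j+2k}\binom{j+2k}{k}\xi^{j+k}$, and the negative binomial series checks out exactly, including the isolated $j=0$ term $\tfrac{1}{1-h}$.
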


\begin{theorem}\label{TeoFibo2}
The generating function for the grand $k$-Fibonacci paths  according to the their length is
\begin{align}
T_k^*(z)&=\sum_{i=0}^{\infty}|\mathcal{M}^*_{F_{k,i}}|z^i=\frac{1-kz-z^2}{\sqrt{(1-(k+1)z-z^2)^2-4z^2(1-kz-z^2)^2}}  \label{ecfibo2}\\
&=\cfrac{1}{1-\frac{z}{1-kz-z^2} - \cfrac{2z^2}{1-\frac{z}{1-kz-z^2}-\cfrac{z^2}{1-\frac{z}{1-kz-z^2}-\cfrac{z^2}{\ddots}}}}  \label{ecfibo22}
\end{align}
and
\begin{align}
\left[z^t\right]T_k^*(z)=F_{k+1,t}^{(1)} + \sum_{n=1}^{t}\sum_{m=0}^{t}\sum_{l=0}^{t-2n-2m}2^n\frac{n}{n+2m}\binom{n+2m}{m}\binom{l+2n+2m}{l}F_{k,t-2n-2m-l+1}^{(l)},  \label{ecfibo3} \end{align}
with $ t\geqslant 1$.
\end{theorem}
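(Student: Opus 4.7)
The plan is to model grand $k$-Fibonacci paths as the language of a convergent bilinear counting automaton and then invoke Corollary \ref{corobi}. Let $\mathcal{M}^*$ be an automaton of the type $\mathcal{M}_{\mathrm{BLin}}$ with state set indexed by $\mathbb{Z}$, where state $i$ records the current ordinate; the edge $i\to i+1$ encodes a rise $U$, the edge $i+1\to i$ a fall $D$, and the loop at every state encodes the choice of a horizontal step $H_l$ in one of $F_{k,l}$ colors for some $l\geq 1$. Thus I set
$$f_i(z)=f'_i(z)=g_i(z)=g'_i(z)=z,\qquad h_i(z)=h'_i(z)=\sum_{l\geq 1}F_{k,l}z^l=\frac{z}{1-kz-z^2}$$
for every $i\in\mathbb{Z}$. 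A grand $k$-Fibonacci path of length $n$ corresponds bijectively to a computation of length $n$ from the initial state $0$ back to $0$, so $[z^t]T_k^*(z)=|\mathcal{M}^*_{F_{k,t}}|$. The sum of labels leaving each state is $2z+z/(1-kz-z^2)$, a formal series with zero constant term, so the Second Convergence Theorem \ref{teorema2conv} applies.

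With the automaton set up, the closed and continued-fraction forms follow from Corollary \ref{corobi} by direct substitution. Using
$$1-h(z)=\frac{1-(k+1)z-z^2}{1-kz-z^2},$$
one computes
$$(1-h(z))^2-4f(z)g(z)=\frac{(1-(k+1)z-z^2)^2-4z^2(1-kz-z^2)^2}{(1-kz-z^2)^2},$$
and then $B_b(z)=1/\sqrt{(1-h(z))^2-4f(z)g(z)}$ immediately yields (\ref{ecfibo2}). Substituting $h(z)=z/(1-kz-z^2)$ and $f(z)g(z)=z^2$ into the continued-fraction expression for $B_b(z)$ in Corollary \ref{corobi} gives (\ref{ecfibo22}).

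For the explicit coefficient formula (\ref{ecfibo3}), I would expand the series representation (\ref{eccorobi2}). The leading term reduces to
$$\frac{1}{1-h(z)}=\frac{1-kz-z^2}{1-(k+1)z-z^2};$$
writing $1/(1-(k+1)z-z^2)=\sum_{t\geq 0}F_{k+1,t+1}^{(1)}z^t$ and invoking the recurrence $F_{k+1,t+1}^{(1)}=(k+1)F_{k+1,t}^{(1)}+F_{k+1,t-1}^{(1)}$ shows that for $t\geq 1$ the coefficient of $z^t$ equals $F_{k+1,t}^{(1)}$, which accounts for the isolated term in (\ref{ecfibo3}). For the remaining triple sum, I would substitute $f(z)^{2n+2m}=z^{2n+2m}$ together with $h(z)^l=z^l(1-kz-z^2)^{-l}=z^l\sum_{j\geq 0}F_{k,j+1}^{(l)}z^j$, extract the coefficient of $z^t$, and reindex with $j=t-2n-2m-l$; the admissibility $j\geq 0$ produces the summation ranges in (\ref{ecfibo3}).

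The step I expect to need the most care is the bookkeeping in the last paragraph: the expression (\ref{eccorobi2}) is \emph{a priori} summed over three infinite indices, so I must justify term-by-term extraction of $[z^t]$, which follows from the observation that each monomial $z^t$ is produced by only finitely many quadruples $(n,m,l,j)$ (because $2n+2m+l+j=t$ forces all four indices to be bounded). Once this finiteness is established, matching the index ranges with those appearing in (\ref{ecfibo3}) is a routine reindexing, and the rest of the argument is a direct invocation of the general machinery of Corollary \ref{corobi}.
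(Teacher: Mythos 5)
Your proposal is correct and follows essentially the same route as the paper: it specializes Corollary \ref{corobi} with $f(z)=g(z)=z$ and $h(z)=\frac{z}{1-kz-z^2}$ to obtain \eqref{ecfibo2} and \eqref{ecfibo22}, and then expands \eqref{eccorobi2}, splitting off the $\frac{1}{1-h(z)}$ term as $F_{k+1,t}^{(1)}$ and reindexing the triple sum via the convolved $k$-Fibonacci expansion of $h(z)^l$ to get \eqref{ecfibo3}. The extra remarks on the height-automaton bijection and on convergence/finiteness of the coefficient extraction are harmless elaborations of what the paper leaves implicit.
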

\begin{proof}
Equations (\ref{ecfibo2}) and (\ref{ecfibo22}) are clear from Corollary  \ref{corobi}, taking $f(z)=z=g(z)$ and $h(z)=\frac{z}{1-kz-z^2}$. Moreover, from Equation (\ref{eccorobi2}), we obtain

\begin{align*}
T_k^*(z)&=\frac{1}{1-\frac{z}{1-kz-z^2}}+\sum_{n=1}^{\infty}\sum_{m=0}^{\infty}\sum_{l=0}^{\infty}2^n\frac{n}{n+2m}\binom{n+2m}{m}\binom{l+2n+2m}{l}z^{2n+2m}\left(\frac{z}{1-kz-z^2}\right)^l\\
&=1+\sum_{j=0}^{\infty}F_{k+1,j}^{(1)}z^j+\sum_{n=1}^{\infty}\sum_{m=0}^{\infty}\sum_{l=0}^{\infty}\sum_{u=0}^{\infty} 2^n\frac{n}{n+2m}\binom{n+2m}{m}\binom{l+2n+2m}{l}F_{k,u}^{(l)}z^{2n+2m+u+1},
\end{align*}
taking $s=2n+2m+l+u$
\begin{multline*}
T_k^*(z)=1+ \sum_{j=0}^{\infty}F_{k+1,j}^{(1)}z^j +\\ \sum_{n=1}^{\infty}\sum_{m=0}^{\infty}\sum_{l=0}^{\infty}\sum_{s=2n+2m+l}^{\infty} 2^n\frac{n}{n+2m}\binom{n+2m}{m}\binom{l+2n+2m}{l}F_{k,s-2n-2m-l}^{(l)}z^s.
\end{multline*}
Therefore  Equation (\ref{ecfibo3}) is clear.
\end{proof}

In Table \ref{tabfibok2} we show the first terms of the sequence $|\mathcal{M}^*_{F_{k,i}}|$ for $k=1, 2, 3, 4$.

\begin{table}[H]
  \centering
  \begin{tabular}{|c|l|}\hline
$k$  & Sequence    \\ \hline
   1 & 1, 4, 11, 36, 115, 378, 1251, 4182, 14073, 47634, \dots \\  \hline
   2 & 1, 5, 16, 63, 237, 920, 3573, 14005, 55156, 218359,  \dots \\  \hline
   3 & 1, 6, 23, 108, 487, 2248, 10371, 48122, 223977, 1046120, \dots \\  \hline
    4 & 1, 7, 32, 177, 949, 5172, 28173, 153963, 842940, 4624581, \dots \\  \hline
       \end{tabular}
  \caption{Sequences $|\mathcal{M}_{F_{k,i}}^*|$ for   $k=1, 2, 3, 4$ and $i\geqslant1$.}
  \label{tabfibok2}
\end{table}

In Figure \ref{fig2} we show the set $\mathcal{M}_{F_{2,3}}^*$.
   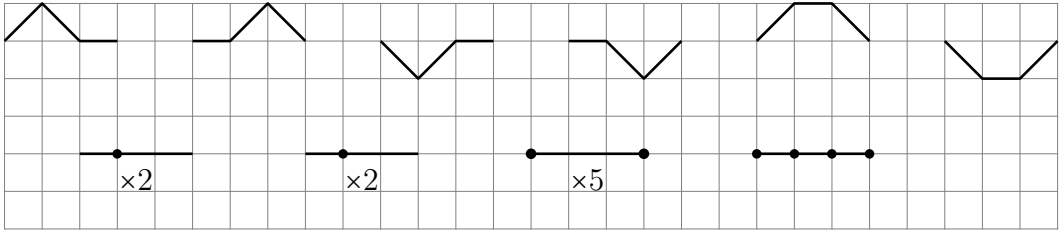
\begin{figure}[H]
\centering
\psset{unit=5mm}
\begin{pspicture}(0,-5)(28,1)
\psgrid[gridwidth=0.3pt,gridcolor=gray,subgriddiv=0, gridlabels=0]
\psline[linewidth=1pt]{-}(0,0)(1,1)(2,0)(3,0)
\psline[linewidth=1pt]{-}(5,0)(6,0)(7,1)(8,0)
\psline[linewidth=1pt]{-}(10,0)(11,-1)(12,0)(13,0)
\psline[linewidth=1pt]{-}(15,0)(16,0)(17,-1)(18,0)
\psline[linewidth=1pt]{-}(20,0)(21,1)(22,1)(23,0)
\psline[linewidth=1pt]{-}(25,0)(26,-1)(27,-1)(28,0)
\psline[linewidth=1pt](20,-3)(23,-3)
\psdots(20,-3)(21,-3)(22,-3)(23,-3)
\psline[linewidth=1pt]{-}(2,-3)(5,-3)
\psdots(3,-3)(9,-3)
\psline[linewidth=1pt]{-}(8,-3)(11,-3)
\psline[linewidth=1pt]{*-*}(14,-3)(17,-3)
\rput(3.5,-3.7){$\times 2$}
\rput(9.5,-3.7){$\times 2$}
\rput(15.5,-3.7){$\times 5$}
\end{pspicture}
\caption{grand $k$-Fibonacci Paths of length 3, $|\mathcal{M}^*_{F_{2,3}}|=16$.}
 \label{fig2}
\end{figure}

\begin{lemma}[\cite{ROD}]\label{autoconteofinal}
The GF of  the automaton \textsc{Fin}$_{\mathbb{N}}(\mathcal{M}_{Lin})$, see Figure \ref{finlineal}, is
\begin{align*}
G(z)=E(z)+\sum_{j=1}^{\infty} \left(\prod_{i=0}^{j-1}(f_i(z)E_i(z))E_j(z) \right),  \label{ecfinn}
\end{align*}
where $E(z)$ is the GF in Lemma  \ref{teoflajolet}.
 \begin{figure}[h]
  \centering
    \unitlength=3pt
    \begin{picture}(52, 14)(-5,-2)
        \gasset{Nw=5,Nh=5,Nmr=2.5,curvedepth=0}
    \thinlines
    \node[Nmarks=ir,iangle=180, curvedepth=3](A0)(0,0){\tiny{0}}
    \node[Nmarks=r](A1)(15,0){\tiny{1}}
    \node[Nmarks=r](A2)(30,0){\tiny{2}}
    \node[Nmarks=r](A3)(45,0){\tiny{3}}
    \drawedge[curvedepth=2.5](A0,A1){$f_0$}
    \drawedge[curvedepth=2.5](A1,A0){$g_0$}
    \drawedge[curvedepth=2.5](A1,A2){$f_1$}
    \drawedge[curvedepth=2.5](A2,A1){$g_1$}
    \drawedge[curvedepth=2.5](A2,A3){$f_2$}
    \drawedge[curvedepth=2.5](A3,A2){$g_2$}
    \drawloop[loopdiam=4,loopangle=90](A0){$h_0$}
    \drawloop[loopdiam=4,loopangle=90](A1){$h_1$}
    \drawloop[loopdiam=4,loopangle=90](A2){$h_2$}
    \drawloop[loopdiam=4,loopangle=90](A3){$h_3$}
    \gasset{Nframe=n,Nadjust=w,Nh=6,Nmr=0}
    \node(P)(52,0){$\cdots$}
    \node(P)(-17,8){\textsc{Fin}$_{\mathbb{N}}(\mathcal{M}_{Lin}):$}
    \end{picture}
  \caption{Linear  infinite counting automaton  \textsc{Fin}$_{\mathbb{N}}(\mathcal{M}_{Lin})$.}
  \label{finlineal}
\end{figure}
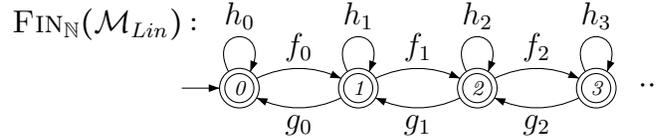
\end{lemma}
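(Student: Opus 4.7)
The plan is to decompose the language accepted by $\textsc{Fin}_{\mathbb{N}}(\mathcal{M}_{Lin})$ according to the final state reached. Since every state of $\textsc{Fin}_{\mathbb{N}}(\mathcal{M}_{Lin})$ is accepting, one may write
\begin{align*}
G(z) = \sum_{j=0}^{\infty} V_j(z),
\end{align*}
where $V_j(z)$ denotes the GF of walks from state $0$ to state $j$ in $\mathcal{M}_{Lin}$. The term $j=0$ is precisely the GF of excursions at level $0$, namely $E(z) = E_0(z)$ from Lemma \ref{teoflajolet}, so the task reduces to showing $V_j(z) = \left(\prod_{i=0}^{j-1} f_i(z) E_i(z)\right) E_j(z)$ for each $j \geq 1$.

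To obtain this factorization I would employ a \emph{last-visit decomposition}. In any walk from $0$ to $j$, consider the last instant at which the walk occupies state $0$. The only outgoing edge from state $0$ that actually moves the walk to a different state is $f_0$ (the loop $h_0$ would keep it at level $0$, contradicting the choice of the last visit), so immediately after this instant the walk must cross $f_0$ into state $1$ and remain at levels $\geq 1$ forever after. This splits the walk uniquely into three pieces: a closed walk from $0$ to $0$ contributing $E_0(z)$, the step $f_0(z)$, and a walk from $1$ to $j$ confined to the subautomaton on states $\{1, 2, \ldots\}$. Iterating the same argument in that subautomaton at state $1$, then state $2$, and so on, yields by a straightforward induction
\begin{align*}
V_j(z) = E_0(z) f_0(z) E_1(z) f_1(z) \cdots f_{j-1}(z) E_j(z),
\end{align*}
because the last-visit-to-$i$ argument holds unchanged in each truncated subautomaton with initial state $i$, whose return GF is precisely $E_i(z)$ by Definition \ref{fgoei}. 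Summing over $j \geq 0$ produces the stated identity.

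The only subtle point I foresee is confirming that the infinite sum $\sum_{j \geq 0} V_j(z)$ is well-defined as a formal power series. This follows at once: any walk reaching state $j$ has length at least $j$, so only finitely many $V_j(z)$ contribute to $[z^n] G(z)$ for each fixed $n$, and the convergence hypotheses on $\mathcal{M}_{Lin}$ transfer verbatim to $\textsc{Fin}_{\mathbb{N}}(\mathcal{M}_{Lin})$ since the underlying counting automata are identical and differ only in their designated set of final states. Uniqueness of the last-visit splitting is automatic, so no ambiguity arises in the bijective decomposition underlying the GF identity, and the proof reduces to a clean enumerative argument rather than any delicate analytic manipulation.
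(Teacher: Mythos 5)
Your argument is correct. Note, however, that the paper itself never proves this lemma: it is quoted from \cite{ROD}, and within the paper's own framework (the CAM phases, Propositions \ref{sisecuacion}--\ref{sisecuacion2} and the convergence theorems) the intended derivation is the one used throughout: truncate \textsc{Fin}$_{\mathbb{N}}(\mathcal{M}_{Lin})$ to finitely many states, solve the resulting finite system of GF equations $L_j=h_jL_j+f_jL_{j+1}+g_{j-1}L_{j-1}+1$, and let the truncation level go to infinity, the limit being legitimized by the convergence theorems. You instead give a direct Flajolet-style combinatorial decomposition: split $G$ by the accepting state $j$ reached, and factor each walk $0\to j$ at its last visits to $0,1,\dots,j-1$, which yields $V_j=E_0f_0E_1f_1\cdots f_{j-1}E_j=\bigl(\prod_{i=0}^{j-1}f_iE_i\bigr)E_j$ and hence the stated sum. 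This is a genuinely different (and arguably more transparent) route: it explains each factor bijectively, at the price of bypassing the paper's uniform machinery. Two small points you should make explicit: the identification of $E_i(z)$ with the return GF of the subautomaton on states $\{i,i+1,\dots\}$ is Definition \ref{fgoei} \emph{combined with} Lemma \ref{teoflajolet} applied to that shifted chain, not the definition alone; and the formal well-definedness of $\sum_j V_j$ rests on the convention that transitions in parallel have zero constant term, so $V_j$ has valuation at least $j$ in $z$ --- which you do state, and which is exactly the role the convergence hypotheses play in the paper's approach. With those attributions tightened, your proof stands as a valid alternative to the cited one.
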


\begin{corollary}\label{corofint}
If for all integer  $i\geqslant 0$, $f_i(z)=f(z), g_i(z)=g(z)$ and $h_i(z)=h(z)$ in \textsc{Fin}$_{\mathbb{N}}(\mathcal{M}_{Lin})$, then the GF is:
\begin{align}
G(z)&=\frac{1-2f(z)-h(z)-\sqrt{(1-h(z))^2-4f(z)g(z)}}{2f(z)\left(f(z)+g(z)+h(z)-1\right)}\\
&=\cfrac{1}{1-f(z)-h(z) - \cfrac{f(z)g(z)}{1-h(z)-\cfrac{f(z)g(z)}{1-h(z)-\cfrac{f(z)g(z)}{\ddots}}}},  \label{ecfin}
\end{align}
where $f(z), g(z)$ and $h(z)$ are transitions in parallel and $B(z)$ is the GF in Corollary  \ref{coro1}. Moreover, if   $f(z)=g(z)$  and $h(z)\neq0$, then we obtain the  GF
\begin{align}
G(z)&=\sum_{n=0}^{\infty}\sum_{k=0}^{\infty}\sum_{l=0}^{\infty}\frac{n+1}{n+k+1}\binom{n+2k+l}{k, l, k+n}f^{2k+n}(z)h^l(z).
\end{align}
\end{corollary}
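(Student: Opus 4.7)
The plan is to specialize Lemma \ref{autoconteofinal} to the constant case $f_i = f$, $g_i = g$, $h_i = h$. Under this, every continued fraction $E_i(z)$ in Definition \ref{fgoei} reduces to the single value $B(z)$ from Corollary \ref{coro1}; denote this common value by $E^*$. Then the sum in Lemma \ref{autoconteofinal} collapses into a geometric series:
$$G(z) = E^* + \sum_{j \geq 1}(fE^*)^j\, E^* = \frac{E^*}{1 - fE^*}.$$
From this identity I will extract in turn the continued fraction, the closed form, and the explicit power series expansion.

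The continued fraction follows from the reciprocal identity $1/G = 1/E^* - f$. Since $E^*$ satisfies $E^* = 1/(1 - h - fgE^*)$, one has $1/E^* = 1 - h - fgE^*$; substituting the continued-fraction form of $fgE^*$ from Corollary \ref{coro1} into $1/G = 1 - f - h - fgE^*$ yields equation (\ref{ecfin}) immediately. For the closed form, I substitute $E^* = (1 - h - \Delta)/(2fg)$ with $\Delta := \sqrt{(1-h)^2 - 4fg}$ into $G = E^*/(1 - fE^*)$, obtaining $G = (1 - h - \Delta)/[f(2g - 1 + h + \Delta)]$, and rationalize by multiplying numerator and denominator by $2g - 1 + h - \Delta$. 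Using $\Delta^2 = (1-h)^2 - 4fg$, the numerator simplifies to $2g(1 - 2f - h - \Delta)$ and the denominator to $4fg(f + g + h - 1)$; cancelling $2g$ recovers the stated formula.

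For the moreover part ($f = g$), I would write $G = \sum_{n \geq 0} f^n (E^*)^{n+1}$ and compute each power by reducing $E^*$ to a Catalan power via a change of variables. Setting $X := (1-h)E^*$ and $u := f^2/(1-h)^2$, the defining quadratic $f^2(E^*)^2 - (1-h)E^* + 1 = 0$ transforms into $uX^2 - X + 1 = 0$, whence $X = C(u) := (1 - \sqrt{1-4u})/(2u)$ is the Catalan generating function. Consequently $(E^*)^{n+1} = (1-h)^{-(n+1)} C(u)^{n+1}$, and combining the classical cycle-lemma power formula $C(u)^{n+1} = \sum_{k \geq 0} \frac{n+1}{n+k+1}\binom{2k+n}{k} u^k$ with the binomial series $(1-h)^{-(2k+n+1)} = \sum_{l \geq 0}\binom{2k+n+l}{l}h^l$ produces the explicit expansion of $(E^*)^{n+1}$ in the variables $f^2$ and $h$. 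Summing against $f^n$ over $n$ and rewriting the product $\binom{2k+n}{k}\binom{2k+n+l}{l}$ as the multinomial $\binom{n+2k+l}{k,\,l,\,n+k}$ produces the claimed triple sum.

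The principal obstacle is the algebraic rationalization in the closed-form derivation, which is mechanical but error-prone in tracking the radical and the conjugate; the remainder is a straightforward application of the Catalan-GF reduction $X = C(u)$ together with the cycle lemma, both of which are well established.
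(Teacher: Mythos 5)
Your proposal is correct and follows the route the paper intends: specializing Lemma \ref{autoconteofinal} to constant transitions so that every $E_i$ equals $B(z)$ of Corollary \ref{coro1}, summing the geometric series to get $G=E^{*}/(1-f E^{*})$, rationalizing for the closed form and the continued fraction, and expanding $(E^{*})^{n+1}$ via Catalan powers and the binomial series for the $f=g$ case (the paper itself omits the argument, deferring to \cite{ROD}). Your algebra checks out, including the simplification of the numerator to $2g\bigl(1-2f-h-\Delta\bigr)$ and denominator to $4fg(f+g+h-1)$, and the identification $\binom{2k+n}{k}\binom{2k+n+l}{l}=\binom{n+2k+l}{k,\,l,\,k+n}$.
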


\begin{theorem}
The generating function for the prefix $k$-Fibonacci paths  according to the their length is
\begin{align*}
PT_k(z)&=\sum_{i=0}^{\infty}|\mathcal{PM}_{F_{k,i}}|z^i\\
&=\frac{(1-2z)(1-kz-z^2)-z-\sqrt{(1-z(k+1)-z^2)^2+4z^2(1-kz-z^2)^2}}{2z((1-kz-z^2)(2z-1)+z)}
\end{align*}
and
\begin{align*}
\left[z^t\right]PT_k(z)=\sum_{n=0}^{t}\sum_{m=0}^{t}\sum_{l=0}^{t-2m-n}\frac{n+1}{n+m+1}\binom{n+2m+l}{m,l,m+n}F_{k,t-2m-n-l+1}^{(l)},  \ t\geqslant 0.
\end{align*}
\end{theorem}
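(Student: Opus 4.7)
The plan is to mirror the strategy used in Theorems \ref{TeoFibo1} and \ref{TeoFibo2}, but now with the automaton $\textsc{Fin}_{\mathbb{N}}(\mathcal{M}_{Lin})$ of Lemma \ref{autoconteofinal} whose GF is given by Corollary \ref{corofint}. A prefix $k$-Fibonacci path is a $k$-Fibonacci path released from the requirement of ending on the $x$-axis, which is precisely captured by making every state $q_i$, $i\geq 0$, of the linear automaton final. With $f_i(z)=g_i(z)=z$ encoding rise and fall steps and $h_i(z)=z/(1-kz-z^2)=\sum_{l\geq 1}F_{k,l}z^l$ encoding the union of all $F_{k,l}$-colored horizontal loops at a given level, the bijection between $L(\textsc{Fin}_{\mathbb{N}}(\mathcal{M}_{Lin}))$ and $\mathcal{PM}_k$ is inherited verbatim from the Motzkin bijection of Example \ref{ejmotzkin} together with the loop interpretation used in Theorem \ref{TeoFibo1}.

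Next I would substitute $f(z)=g(z)=z$ and $h(z)=z/(1-kz-z^2)$ into the closed form
$$G(z)=\frac{1-2f(z)-h(z)-\sqrt{(1-h(z))^2-4f(z)g(z)}}{2f(z)\bigl(f(z)+g(z)+h(z)-1\bigr)}$$
of Corollary \ref{corofint} and clear the factor $(1-kz-z^2)$ from the nested fraction. The key identities
$$1-h(z)=\frac{1-(k+1)z-z^2}{1-kz-z^2},\qquad f+g+h-1=\frac{(2z-1)(1-kz-z^2)+z}{1-kz-z^2}$$
reduce $(1-h)^2-4fg$ to $\bigl[(1-(k+1)z-z^2)^2-4z^2(1-kz-z^2)^2\bigr]/(1-kz-z^2)^2$, so that the factor $(1-kz-z^2)$ pulled out of the radical cancels against the common denominator of numerator and denominator of $G$. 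This yields the stated closed form for $PT_k(z)$, with the correct branch of the square root selected by the normalization $PT_k(0)=1$.

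For the explicit coefficient, I would apply the triple-sum expansion of Corollary \ref{corofint},
$$G(z)=\sum_{n,m,l\geq 0}\frac{n+1}{n+m+1}\binom{n+2m+l}{m,\,l,\,m+n}f(z)^{2m+n}h(z)^l$$
(renaming the corollary's index $k$ to $m$ to avoid clash with the $k$-Fibonacci parameter). Substituting $f(z)=z$ and $h(z)^l=z^l(1-kz-z^2)^{-l}=\sum_{j\geq 0}F_{k,j+1}^{(l)}z^{j+l}$, the monomial degree collapses to $2m+n+l+j$. Setting $t=2m+n+l+j$ so that $j=t-2m-n-l$, reading off $[z^t]PT_k(z)$ and restricting the sum to the tuples $(n,m,l)$ with $2m+n+l\leq t$ gives exactly the triple sum in the statement.

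The main obstacle is the bookkeeping in step two: one must be careful when pulling $(1-kz-z^2)$ out of the radical to keep the correct sign so that the resulting GF remains a power series with $PT_k(0)=1$, and in simultaneously recombining the factor $(1-kz-z^2)$ with the denominator $2f(f+g+h-1)$ to obtain the advertised form $2z\bigl((1-kz-z^2)(2z-1)+z\bigr)$. Once the closed form is in place, the coefficient extraction is a routine reindexing via convolved $k$-Fibonacci numbers, entirely parallel to the computation carried out in the proof of Theorem \ref{TeoFibo2}.
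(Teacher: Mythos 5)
Your proposal is correct and follows exactly the route the paper intends---its own proof is only the remark that the argument is analogous to Theorems \ref{TeoFibo1} and \ref{TeoFibo2}, i.e., substitute $f(z)=g(z)=z$ and $h(z)=\frac{z}{1-kz-z^2}$ into Corollary \ref{corofint} (the all-states-final automaton \textsc{Fin}$_{\mathbb{N}}(\mathcal{M}_{Lin})$) and re-expand via convolved $k$-Fibonacci numbers, which is precisely what you do. Note only that your algebra (correctly) produces the radicand $(1-(k+1)z-z^2)^2-4z^2(1-kz-z^2)^2$, so the ``$+$'' sign printed inside the square root of the theorem's statement is a typo rather than a defect of your derivation, as a numerical check against Table \ref{tabfibok3} confirms.
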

\begin{proof}
The proof is analogous to the proof of Theorem \ref{TeoFibo1} and \ref{TeoFibo2}.
\end{proof}

In Table \ref{tabfibok3} we show the first terms of the sequence $|\mathcal{PM}_{F_{k,i}}|$ for $k=1, 2, 3, 4$.

\begin{table}[h]
  \centering
  \begin{tabular}{|c|l|}\hline
$k$  & Sequence    \\ \hline
   1 & 1, 2, 6, 19, 62, 205, 684, 2298, 7764, 26355, 89820, \dots \\  \hline
   2 & 1, 2, 7, 26, 101, 396, 1564, 6203, 24693, 98605, 394853,  \dots \\  \hline
   3 & 1, 2, 8, 35, 162, 757, 3558, 16766, 79176, 374579, 1775082, \dots \\  \hline
    4 & 1, 2, 9, 46, 251, 1384, 7668, 42555, 236463, 1315281, 7322967,  \dots \\  \hline
       \end{tabular}
  \caption{Sequences $|\mathcal{PM}_{F_{k,i}}|$ for   $k=1, 2, 3, 4$.}
  \label{tabfibok3}
\end{table}

In Figure \ref{fig3} we show the set $\mathcal{MP}_{F_{2,3}}$.
   \begin{figure}[H]
\centering
\psset{unit=5mm}
\begin{pspicture}(0,-9)(28,2)
\psgrid[gridwidth=0.3pt,gridcolor=gray,subgriddiv=0, gridlabels=0]
\psline[linewidth=1pt]{-}(0,0)(1,1)(2,0)(3,0)
\psline[linewidth=1pt]{-}(5,0)(6,0)(7,1)(8,0)
\psline[linewidth=1pt]{-}(10,0)(11,1)(12,2)(13,2)
\psline[linewidth=1pt]{-}(15,0)(16,1)(17,2)(18,1)
\psline[linewidth=1pt]{-}(20,0)(21,1)(22,1)(23,0)
\psline[linewidth=1pt]{-}(25,0)(26,1)(27,1)(28,2)
\psline[linewidth=1pt]{-}(0,-3)(3,-3)
\psdots(1,-3)(7,-3)
\psline[linewidth=1pt]{-}(5,-3)(8,-3)
\psline[linewidth=1pt]{*-*}(10,-3)(13,-3)
\psline[linewidth=1pt](15,-3)(18,-3)
\psline[linewidth=1pt](20,-3)(21,-2)(23,-2)
\psline[linewidth=1pt](25,-3)(26,-2)(28,-2)
\psdots(15,-3)(16,-3)(17,-3)(18,-3)(21,-2)(22,-2)(23,-2)(26,-2)(28,-2)
\psline[linewidth=1pt]{-}(0,-8)(3,-5)
\psline[linewidth=1pt](5,-8)(6,-7)(7,-8)(8,-7)
\psline[linewidth=1pt](10,-8)(11,-8)(13,-6)
\psline[linewidth=1pt](15,-8)(16,-8)(17,-7)(18,-7)
\psline[linewidth=1pt](20,-8)(22,-8)(23,-7)
\psline[linewidth=1pt](25,-8)(27,-8)(28,-7)
\psdots(20,-8)(21,-8)(22,-8)(25,-8)(27,-8)
\rput(1.5,-3.7){$\times 2$}
\rput(6.5,-3.7){$\times 2$}
\rput(11.5,-3.7){$\times 5$}
\rput(26.5,-3.7){$\times 2$}
\rput(26.5,-8.7){$\times 2$}
\end{pspicture}
\caption{prefix $k$-Fibonacci paths of length 3, $|\mathcal{PM}_{F_{2,3}}|=26$.}
 \label{fig3}
\end{figure}
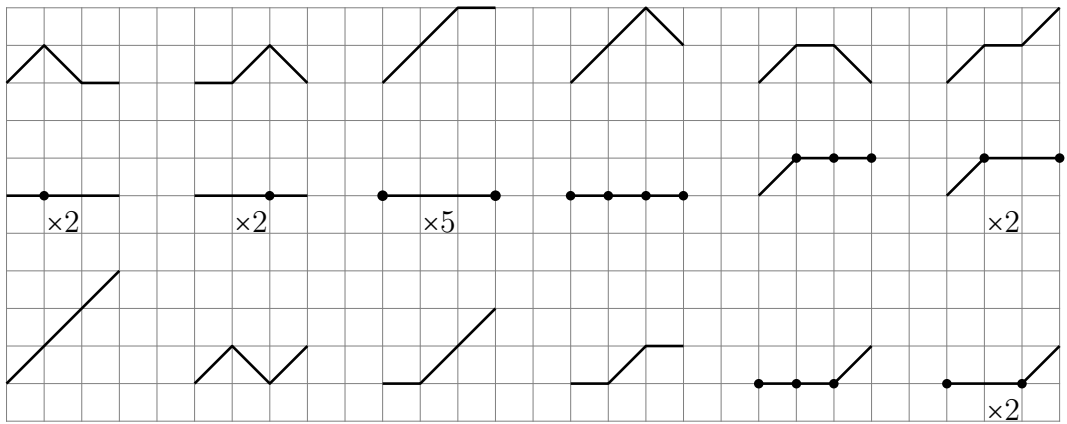

 \begin{lemma}\label{autoconteobifinal}
The GF of the automaton  \textsc{Fin}$_{\mathbb{Z}}(\mathcal{M}_{BLin})$, see Figure \ref{finbilineal}, is
\begin{align*}
H(z)&=\frac{EE'}{E + E'-EE'(1-h_0)}\left(1+ \sum_{j=1}^{\infty}\prod_{k=1}^{j-1}f_kE_k f_0E_j +  \sum_{j=1}^{\infty}\prod_{k=1}^{j-1}g'_kE'_k g'_0E'_j  \right)\\
&=\frac{E'(z)G(z)+E(z)G'(z)-E(z)E'(z)}{E(z)+E'(z)-E(z)E'(z)(1-h_0(z))},
\end{align*}
\normalsize
where $G(z)$ is the  GF in Lemma  \ref{autoconteofinal} and $G'(z), E'(z)$ are the GFs obtained from  $G(z)$ and $E(z)$ changing  $f(z)$ to $g'(z)$ and $g(z)$ to  $f'(z)$.
 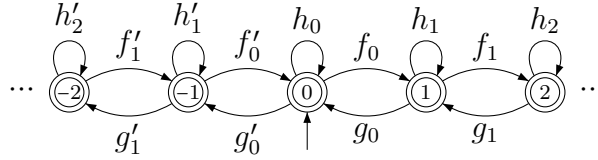
\begin{figure}[h]
 \begin{center}
    \unitlength=3pt
    \begin{picture}(80, 12)(-40,-2)
        \gasset{Nw=5,Nh=5,Nmr=2.5,curvedepth=0}
    \thinlines
    \node[Nmarks=ir,iangle=-90, curvedepth=3](A0)(0,1){\tiny{$0$}}
    \node[Nmarks=r](A1)(15,1){\tiny{$1$}}
    \node[Nmarks=r](A2)(30,1){\tiny{$2$}}
    \node[Nmarks=r](A11)(-15,1){\tiny{$-1$}}
     \node[Nmarks=r](A22)(-30,1){\tiny{$-2$}}
    \drawedge[curvedepth=3](A0,A1){$f_0$}
    \drawedge[curvedepth=3](A1,A0){$g_0$}
    \drawedge[curvedepth=3](A1,A2){$f_1$}
    \drawedge[curvedepth=3](A2,A1){$g_1$}
    \drawedge[curvedepth=3](A0,A11){$g'_0$}
    \drawedge[curvedepth=3](A11,A22){$g'_1$}
        \drawedge[curvedepth=3](A22,A11){$f'_1$}
    \drawedge[curvedepth=3](A11,A0){$f'_0$}
    \gasset{Nframe=n,Nadjust=w,Nh=6,Nmr=0}
    \node(P)(36,1){$\cdots$}
       \node(P)(-36,1){$\cdots$}
       \drawloop[loopdiam=4,loopangle=90](A0){$h_0$}
    \drawloop[loopdiam=4,loopangle=90](A1){$h_1$}
    \drawloop[loopdiam=4,loopangle=90](A2){$h_2$}
    \drawloop[loopdiam=4,loopangle=90](A11){$h'_1$}
       \drawloop[loopdiam=4,loopangle=90](A22){$h'_2$}
    \end{picture}
  \end{center}
  \caption{Linear infinite counting automaton \textsc{Fin}$_{\mathbb{Z}}(\mathcal{M}_{BLin})$.}
  \label{finbilineal}
\end{figure}

Moreover, if for all integer $i\geqslant 0$, $f_i(z)=f(z)=f'_i(z), g_i(z)=g(z)=g'_i(z)$ and $h_i(z)=h(z)=h'_i(z)$ in \textsc{Fin}$_{\mathbb{Z}}(\mathcal{M}_{BLin})$, then the GF is
\begin{align}
H(z)&=\frac{1}{1-f(z)-g(z)-h(z)}.
\end{align}
\end{lemma}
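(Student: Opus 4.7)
The approach is to decompose each word accepted by \textsc{Fin}$_{\mathbb{Z}}(\mathcal{M}_{\mathrm{BLin}})$ at its \emph{last} visit to state $0$. The prefix ending at that last $0$-visit is accepted by the underlying $\mathcal{M}_{\mathrm{BLin}}$ (in which only state $0$ is terminal) and therefore contributes the factor $E_b(z)$ from Lemma \ref{teoflajoletbi}. The suffix is a tail that leaves $0$ and never returns, so it is either empty, a rightward path through states $1,2,\ldots$ with GF $R_+(z)=f_0E_1+f_0f_1E_1E_2+\cdots$, or its mirror $R_-(z)$. This already yields the raw decomposition
\[
H(z) \;=\; E_b(z)\,\bigl(1 + R_+(z) + R_-(z)\bigr),
\]
which is precisely the parenthesized sum displayed in the lemma.

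To obtain the closed form, I would identify $R_\pm$ with the GFs of Lemma \ref{autoconteofinal}. Factoring the formula of that lemma as $G=E\,(1+R_+)$ gives $R_+=G/E-1$, and mirror symmetry gives $R_-=G'/E'-1$, so
\[
H \;=\; E_b\Bigl(\tfrac{G}{E}+\tfrac{G'}{E'}-1\Bigr).
\]
To eliminate $E_b$, I would compare the continued fractions in Lemmas \ref{teoflajolet} and \ref{teoflajoletbi}: they yield the identities $f_0g_0E_1=1-h_0-1/E$ and, by mirror symmetry, $f'_0g'_0E'_1=1-h_0-1/E'$. Substituting these into the denominator $1-h_0-f_0g_0E_1-f'_0g'_0E'_1$ of $E_b$ collapses it to $EE'/(E+E'-EE'(1-h_0))$, and multiplying out delivers the claimed closed form $H=(E'G+EG'-EE')/(E+E'-EE'(1-h_0))$.

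For the constant-coefficient reduction, the cleanest route is translation invariance. Under the hypotheses, every state of \textsc{Fin}$_{\mathbb{Z}}(\mathcal{M}_{\mathrm{BLin}})$ carries the same local data: a loop of weight $h$, a rightward edge of weight $f$, a leftward edge of weight $g$, and final status. Hence the GF starting from state $i$ is independent of $i$, and first-step analysis from $0$ gives the single equation $H=1+(h+f+g)H$, whence $H=1/(1-f-g-h)$. As a sanity check, one may instead substitute the closed forms of Corollaries \ref{coro1} and \ref{corofint} into the general formula; the algebraic reduction there hinges on the identity $(1-2f-h)(1+2f-h)=(1-h)^2-4f^2$, which is what makes the radicals cancel. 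I expect the main obstacle of the whole proof is bookkeeping: keeping the four auxiliary series $E,E',G,G'$ straight and being careful when invoking the mirror-symmetric continued-fraction relations used to rewrite $E_b$. The rest is essentially routine.
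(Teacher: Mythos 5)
Your argument is correct and complete. The paper itself offers no proof of Lemma \ref{autoconteobifinal} (like the other structural lemmas it is taken from \cite{ROD}), so there is nothing to compare line by line; but your route is the natural one and it works: decomposing an accepted word at its \emph{last} visit to state $0$ gives the prefix factor $E_b(z)$ of Lemma \ref{teoflajoletbi} and the tail factor $1+R_++R_-$ with $R_+=f_0E_1+f_0f_1E_1E_2+\cdots=G/E-1$ (by factoring $E=E_0$ out of Lemma \ref{autoconteofinal}) and $R_-=G'/E'-1$ by the mirror argument; and the recursion $E=1/(1-h_0-f_0g_0E_1)$, together with its mirror image, does collapse $E_b$ to $EE'/(E+E'-EE'(1-h_0))$, yielding exactly the stated closed form. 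Your translation-invariance proof of the constant-weight case is also valid (every state carries loop $h$, right edge $f$, left edge $g$, and is final, so $H=1+(f+g+h)H$), and it is cleaner than substituting the closed forms; I checked that the substitution route indeed reduces to $1/(1-f-g-h)$ as well. One small caveat: the identity $(1-2f-h)(1+2f-h)=(1-h)^2-4f^2$ you mention is only the $f=g$ special case; in general the cancellation of radicals runs through $(1-h)^2-4fg=(1-h-s)(1-h+s)$ with $s=\sqrt{(1-h)^2-4fg}$, but since this is only your sanity check it does not affect the proof.
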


\begin{theorem}
The generating function for the prefix grand $k$-Fibonacci paths according to the their length is
\begin{align*}
PT^*_k(z)=\sum_{i=0}^{\infty}|\mathcal{PM}_{F^*_{k,i}}|z^i=\frac{1-kz-z^2}{1-(k+3)z-(1-2k)z^2+2z^3}.
\end{align*}
\end{theorem}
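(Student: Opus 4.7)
The plan is to apply the symmetric case of Lemma~\ref{autoconteobifinal} directly, once a bijection is established between prefix grand $k$-Fibonacci paths and the words recognized by \textsc{Fin}$_{\mathbb{Z}}(\mathcal{M}_{\mathrm{BLin}})$. First I would describe the encoding: each state $i\in\mathbb{Z}$ represents the current height of a path, a transition from $i$ to $i+1$ encodes a rise step $U=(1,1)$, a transition from $i+1$ to $i$ encodes a fall step $D=(1,-1)$, and a loop at state $i$ encodes an $F_{k,l}$-colored horizontal step $H_l$. The fact that the ladder is indexed by the full set $\mathbb{Z}$ captures the ``grand'' condition (no floor at the $x$-axis), and declaring every state final captures the ``prefix'' condition (the path may end at any height).

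With this encoding, I would set $f_i(z)=g_i(z)=f'_i(z)=g'_i(z)=z$ and, using the $k$-Fibonacci generating function recalled in the Introduction,
\[
h_i(z)=h'_i(z)=\sum_{l=1}^{\infty}F_{k,l}\,z^l=\frac{z}{1-kz-z^2}.
\]
Here the coefficient $F_{k,l}$ of $z^l$ in $h(z)$ realizes exactly $F_{k,l}$ parallel loops of length $l$ at each state, one per color of $H_l$, so every colored horizontal step of every length is accounted for exactly once. Since the weights are independent of $i$, the symmetric form at the end of Lemma~\ref{autoconteobifinal} applies and gives
\[
PT^*_k(z)=\frac{1}{1-f(z)-g(z)-h(z)}=\frac{1}{\,1-2z-\dfrac{z}{1-kz-z^2}\,}.
\]

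The remaining step is a short algebraic simplification: multiplying numerator and denominator by $1-kz-z^2$ yields $\dfrac{1-kz-z^2}{(1-2z)(1-kz-z^2)-z}$, and expanding the denominator to $1-(k+3)z-(1-2k)z^2+2z^3$ matches the stated closed form.

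The only point requiring care is the bijection itself, which is essentially the one used in Theorems~\ref{TeoFibo1} and~\ref{TeoFibo2} transplanted to the bi-infinite ladder; once it is in place, the result is an immediate consequence of Lemma~\ref{autoconteobifinal} together with routine algebra, so no new analytic difficulty arises.
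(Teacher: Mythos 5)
Your proposal is correct and follows essentially the same route as the paper: the paper's (very terse) proof likewise applies the symmetric case of Lemma~\ref{autoconteobifinal} with $f(z)=g(z)=z$ and $h(z)=\frac{z}{1-kz-z^2}$, the bi-infinite ladder with all states final encoding the prefix grand paths, and the stated closed form follows by the same algebraic simplification $(1-2z)(1-kz-z^2)-z=1-(k+3)z-(1-2k)z^2+2z^3$. Your write-up actually spells out the bijection and the computation more explicitly than the paper does.
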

\begin{proof}
The proof is analogous to the proof of Theorem  \ref{TeoFibo1} and \ref{TeoFibo2}.

\end{proof}

In Table \ref{tabfibok4} we show the first terms of the sequence $|\mathcal{PM}^*_{F_{k,i}}|$ for $k=1, 2, 3, 4$.

\begin{table}[h]
  \centering
  \begin{tabular}{|c|l|}\hline
$k$  & Sequence   \\ \hline
   1 & 1, 3, 10, 35, 124, 441, 1570, 5591, 19912, 70917, 252574,  \dots \\  \hline
   2 & 1, 3, 11, 44, 181, 751, 3124, 13005, 54151, 225492, 938997,  \dots \\  \hline
   3 & 1, 3, 12, 55, 264, 1285, 6280, 30727, 150392, 736157, 3603528, \dots  \\  \hline
    4 & 1, 3, 13, 68, 379, 2151, 12268, 70061, 400249, 2286780, 13065595  \dots \\  \hline
       \end{tabular}
  \caption{Sequences $|\mathcal{PM}^*_{F_{k,i}}|$ for   $k=1, 2, 3, 4$.}
  \label{tabfibok4}
\end{table}

\section{Acknowledgments}
The second author was partially supported by Universidad Sergio Arboleda under Grant no. DII-
262.

\end{document}